\providecommand{\U}[1]{\protect\rule{.1in}{.1in}}
\numberwithin{equation}{section}
\newenvironment{keywords}{\noindent{\bf Key words:}}
\newenvironment{AMS}{\noindent{\bf Mathematics Subject Classification:}}
\newtheorem{theorem}{Theorem}
\newtheorem{example}{Example}
\newtheorem{lemma}[theorem]{Lemma}
\newtheorem{property}{Property}
\newtheorem{proposition}{Proposition}
\newtheorem{remark}{Remark}
\newcommand{\diag}{\mathrm{diag}}
\newcommand{\blockdiag}{\mathrm{blkdiag}}
\newenvironment{proof}{\noindent{\bf Proof:}}{\hfill\fbox{}\vspace*{1mm}}
\title{A single-sided all-at-once preconditioning for linear system from a non-local evolutionary equation with weakly singular kernels}
\begin{document}	
	
	\author[a]{Xuelei Lin \thanks{E-mail: linxuelei@hit.edu.cn}}
	\author[b]{Jiamei Dong\thanks{Corresponding author:21482799@life.hkbu.edu.hk}}
	\author[b]{Sean Hon}
	\affil[a]{School of Science, Harbin Institute of Technology,
		Shenzhen 518055, China}
	\affil[b]{Department of Mathematics, Hong Kong Baptist University, Kowloon Tong, Hong Kong SAR, People’s Republic China}

	\maketitle
	
	\begin{abstract}
		{In 	[X. L. Lin, M. K. Ng, and Y. Zhi. {\it J. Comput. Phys.}, 434 (2021), pp. 110221] and [Y. L. Zhao, J. Wu, X. M. Gu, and H. Li. {\it Comput. Math. Appl.}, 148(2023), pp. 200--210]}, two-sided preconditioning techniques are proposed for non-local evolutionary equations, which possesses (i) mesh-size independent theoretical bound of condition number of the two-sided preconditioned matrix; (ii) small and stable iteration numbers in numerical tests. In this paper, we modify the two-sided preconditioning  by multiplying the left-sided and the right-sided preconditioners together as a single-sided preconditioner. Such a  single-sided preconditioner essentially derives from approximating the spatial matrix with a fast diagonalizable matrix and keeping the temporal matrix unchanged. Clearly, the matrix-vector multiplication of the single-sided preconditioning is faster to compute than that of the two-sided one, since the single-sided preconditioned matrix has a simpler structure. More importantly, we show theoretically that the single-sided preconditioned generalized minimal residual (GMRES) method has a convergence rate no worse than the two-sided preconditioned one. As a result, the one-sided preconditioned GMRES solver requires less computational time than the two-sided preconditioned GMRES solver in total. 
		Numerical results are reported to show the efficiency of the proposed single-sided preconditioning technique.
	\end{abstract}
	\begin{keywords} Preconditioning, Krylov subspace, all-at-once system, Toeplitz matrix, parallel-in-time
	\end{keywords}
	
	\begin{AMS}
		65F10; 65F08; 15A12; 15A60
	\end{AMS}
	
	\section{Introduction}\label{introduction}
	
	Consider a {space and time fractional} Bloch-Torrey equation (see, e.g., \cite{zhou2010studies,magin2008anomalous,zhao2023bilateral}), which is a non-local evolutionary equation with weakly singular kernels as follows 
	\begin{equation}
		\begin{cases}
			\frac{1}{\Gamma(1-\alpha)}\int_{0}^{t}\frac{\partial u({\bf x},s)}{\partial s}(t-s)^{-\alpha}ds=\sum\limits_{i=1}^{d}c_i\frac{\partial^{\beta_{i}}u({\bf x},t)}{\partial|x_{i}|^{\beta_{i}}}+f({\bf x},t),\quad {\bf x}\in \Omega\subset\mathbb{R}^d,~ t\in(0,T],\\
			u({\bf x},t)=0,\quad {\bf x}\in\partial \Omega,~t\in(0,T],\\
			u({\bf x},0)=\psi({\bf x}),\quad {\bf x}\in \Omega,\label{tsfde}
		\end{cases}
	\end{equation}
	where $\Gamma(\cdot)$ is the Gamma function, $0< \alpha< 1$,  $f$ and $\psi$ are both given functions; the boundary of $\Omega$ is $\partial\Omega$; $\Omega=\prod_{i=1}^{d}(\check{a}_i,\hat{a}_i)$;  ${\bf x}=(x_1,x_2,...,x_d)$ is a point in $\mathbb{R}^d$; for $i=1,\dots,d$, $c_{i}$  are positive constants; $\frac{\partial^{\beta_i}u({\bf x},t)}{\partial|x_i|^{\beta_i}}$ is the Riesz fractional derivative of order ${\beta_i}\in(1,2)$ with respect to $x_i$ defined as
	\begin{equation}\label{Riesz}
		\frac{\partial^{\beta_i}u({\bf x},t)}{\partial|x_i|^{\beta_i}}:=\frac{-1}{2\cos(\beta_i\pi/2)\Gamma(2-\beta_i)} \frac{\partial^2}{\partial x_i^2}\int_{\check{a}_i}^{\hat{a}_i}\frac{u(x_1,x_2,...,x_{i-1},\xi,x_{i+1},...,x_d,t)}{|x_i-\xi|^{\beta_i-1}}d\xi.\
	\end{equation}
	
	


	Since the analytical solution of \eqref{tsfde} is usually unavailable, discretization schemes are proposed for numerical solution; see, e.g., \cite{zhao2016galerkin,feng2016finite,arshad2017trapezoidal,hamid2019innovative,sun2016some,zhu2018high,bu2015finite,chen2013superlinearly,song2014spatially,yu2012computationally,yu2013numerical}.  Due to the nonlocal properties of the fractional differential operators, linear systems arising from these schemes are usually dense, which makes direct solvers like Gaussian elimination time-consuming and thus impractical to use.  Fortunately, due to the shift-invariant kernel involved in the fractional differential operators, the dense matrices have Toeplitz-like structure, whose matrix-vector multiplications can be fast implemented by fast Fourier transforms (FFTs). Based on the fast matrix-vector multiplication, iterative solvers are developed for solving the linear systems.

	The solvers for the linear systems are mainly classified as two types: time-stepping type and all-at-once type. The time-stepping solvers solve the linear system in a time-sequential manner (i.e., the unknowns at $n$-th time level have to be computed before computing the unknowns at $n+1$-th time level). For time-stepping type solvers, one may refer to \cite{lin2019fast,jiang2017fast,baffet2017kernel}. The all-at-once type solvers aim to solve the time-space linear system and update the unknowns at all time levels in a parallel manner; see, e.g.,  \cite{gu2020parallel,zhao2019limited,lin2018separable,zhao2021preconditioning}.
	Recently, new all-at-once type methods are studied in  {\cite{zhao2023bilateral,lin2021parallel}}, in which two-sided preconditioning techniques are studied for time-space linear systems arising from time-fractional diffusion equations. In the two-sided preconditioning method,  the spatial discretization matrix is  approximated by a $\tau$-matrix (i.e., a matrix is diagonalizable by sine transform, see, e.g., \cite{huang2021spectral}) and the temporal discretization matrix keeps unchanged. Then, an inverse of the square root of the { $\tau$-matrix} is spitted from the $\tau$-matrix approximation matrix as the right side preconditioner and the remaining part is taken as the left preconditioner. This is the so-called two-sided preconditioning method. The condition numbers of the two-sided preconditioned matrices are proven to be uniformly bounded by constants independent of discretization step sizes; see, e.g., \cite{zhao2023bilateral,lin2021parallel}. Numerical results in \cite{zhao2023bilateral,lin2021parallel} have shown the efficiency of the two-sided preconditioning method in terms of small and stable iteration numbers.
	
	Nevertheless, we find that the two-sided splitting is not necessary. In this paper, we show theoretically that the GMRES solver for the single-sided preconditioned system converges no slower than the GMRES solver for the two-sided preconditioned system. 
	
	Compared with the two-sided preconditioned system, solving the problem on the single-sided preconditioned system has two advantages: (i) computing the matrix-vector product associated with the single-sided preconditioned matrix requires less operations and is easier to implement (i.e., the code is simpler); (ii) the single-sided preconditioning is more flexible in the sense that the $\tau$ matrix can be replaced by other spatial approximations without a fast computable square root. 
	
	Indeed, the numerical results in the later section further show that the single-sided preconditioning method is more efficient than the two-sided preconditioning method in terms of computational time and iteration numbers, which supports our theoretical results.

	The contribution of this paper is twofold. Firstly, we significantly modify the two-sided preconditioning method by simplifying it to the single-sided preconditioning method that is easier to implement and requires less computational time. Secondly, the paper is not just focused on a single spatial discretization scheme. Actually, the theoretical results are established on common properties shared by several spatial discretization schemes. Hence, the proposed preconditioning method can be used to efficiently solve several  discrete problems arising from several  spatial discretization schemes in the literature, which demonstrates the range of its applicability.

	The paper is organized as follows. In Section \ref{discretization}, the discretization of \eqref{tsfde} and the corresponding linear system are presented. In Section 3, the $\tau$-preconditioner is proposed and the convergence behavior of the GMRES solver for the preconditioned system is discussed. In Section \ref{implementsection}, a fast implementation of the matrix-vector product of the preconditioned matrix is presented. In Section \ref{sec:numerical}, numerical results are reported for showing the performance of the proposed preconditioning method. Section \ref{sec:conclusion}  gives a conclusion.
	
	\section{Discretization of Eqn \eqref{tsfde} and the time-space linear system}\label{discretization}
	For any nonnegative integer $m$, $n$ with $m\leq n$, define the set $m\wedge n:=\{m,m+1,...,n-1,n\}$. Denote $\mathbb{N}^{+}$ be the set of all positive integers.
	\subsection{Temporal discretization}
	Let $N\in\mathbb{N}^{+}$ and the temporal step size $\mu=T/N$. With the L1 scheme (see, e.g.,{ \cite{sun2006fully,lin2007finite,jin2016analysis,liao2018sharp,zhao2019limited})}, the temporal discretization has the following form
	\begin{align}
		&	\frac{1}{\Gamma(1-\alpha)}\int_{0}^{t}\frac{\partial u({\bf x},s)}{\partial s}(t-s)^{-\alpha}ds\Big|_{t=n\mu}= \sum\limits_{k=1}^{n}l_{n-k}^{(\alpha)}u({\bf x},n\mu)+l^{(n,\alpha)}\psi({\bf x})+\mathcal{O}(\mu^{2-\alpha}),\label{convlqdra}
	\end{align}
	where ${\bf x}$ $ \in \Omega$, $n\in 1\wedge N$ and 
	\begin{align*}
		&l_{k}^{(\alpha)}=\begin{cases}
			[\Gamma(2-\alpha)\mu^{\alpha}]^{-1},\quad k=0,\\
			[\Gamma(2-\alpha)\mu^{\alpha}]^{-1}[(k+1)^{1-\alpha}-2k^{1-\alpha}+(k-1)^{1-\alpha}],\quad k\in 1\wedge (N-1),
		\end{cases}\notag\\
		&l^{(n,\alpha)}=[(n-1)^{1-\alpha}-n^{1-\alpha}][\Gamma(2-\alpha)\mu^{\alpha}]^{-1},\quad n\in 1\wedge N.\notag 
	\end{align*}
	
	\subsection{Space discretization}
	Actually, the theoretical results and algorithmic implementation in the later sections are applicable to a number of numerical schemes for the spatial discretization. Hence, in this subsection, we will present a general form of the spatial discretization and some common properties which will be utilized in the theoretical discussion.
	
	Let $m_i\in\mathbb{N}^{+}$, $i\in 1\wedge d$, and the $i$-th direction space step size $h_i=(\hat{a}_i-\check{a}_i)/(m_i+1)$. Setting the $j$ point along the $i$-th direction $x_{i,j}=\check{a}_i+jh_i$, for $j \in 0\wedge m_i+1$. In general, the discretization of $\frac{\partial^{\beta_i}u({\bf x},t)}{\partial|x_i|^{\beta_i}}$ on the grid points $x_{i,j}$ ($i=1,2,...,d,j=1,2,...,m_i$) is of the following form (see, e.g., \cite{huang2021preconditioner,lin2022tau,tian2015class,ding2017high,meerschaert2006finite,chen2014fourth}),
	\begin{align}
		&	\frac{\partial^{\beta_i}u({\bf x},t)}{\partial|x_i|^{\beta_i}}\Big|_{{\bf x}=(x_{1,j_1},x_{2,j_2},...,x_{d,j_d})}\approxeq -\frac{1}{h_i^{\beta_i}}\sum\limits_{k=1}^{m_i}w_{|j_i-k|}^{(\beta_i)}u(x_{1,j_1},...,x_{i-1,j_{i-1}},x_{i,k},x_{i+1,j_{i+1}},...,x_{d,j_d},t),	\label{spatialapp}
	\end{align}
	where $j_i\in 1\wedge m_i$.
	
	Some  common properties of the numbers $w_{k}^{(\beta)}$, $\beta\in(1,2)$ are listed as follows, which will be utilized in the later theoretical analysis. 
	There are several spatial discretization scheme possesses the following properties (see the discussion in Appendix).

	\begin{property}\label{wkprop}
		For $\beta\in(1,2)$, it holds\footnote{See the spatial discretization schemes satisfying  property 1 discussed in the Appendices}
		\begin{description}
			\item[(i)]$w_0^{(\beta)}>0$, $w_{k}^{(\beta)}\leq 0$ for $k\geq 1$;
			\item[(ii)] $\inf\limits_{m\geq 1}{(m+1)^{\beta}}\left(w_0^{(\beta)}+2\sum\limits_{k=1}^{m-1}w_k^{(\beta)}\right)>0$;
			\item[(iii)]$w_{k}^{(\beta)}\leq w_{k+1}^{(\beta)}$ for $k\geq 1$.
		\end{description}
	\end{property}
	
	\subsection{All-at-once system}
	Denote
	\begin{equation*}
		J=\prod\limits_{i=1}^{d}m_i,\quad m_1^{-}=m_d^{+}=1,\quad m_i^{-}=\prod\limits_{j=1}^{i-1}m_i,~i\in 2\wedge d,\quad  m_k^{+}=\prod\limits_{j=k+1}^{d}m_j,~k\in 1\wedge (d-1).
	\end{equation*}
	With Eqn \eqref{convlqdra} and Eqn \eqref{spatialapp} , we obtain the following all-at-once linear system as a discretization of the continuous problem \eqref{tsfde}
	\begin{equation}\label{aaosystem}
		{\bf A}{\bf u}={\bf f},
	\end{equation}
	where ${\bf u}=({\bf u}_1;{\bf u}_2;\dots;{\bf u}_{m_1}) \in \mathbb{R}^{NJ\times 1}$, ${\bf f}=({\bf f}_1;{\bf f}_2;\dots;{\bf f}_{m_1}) \in \mathbb{R}^{NJ\times 1}$. ${\bf u}_j$ is a vector component by lexicographic order approximate values of $u(\check{a}_1+jh_1,\cdot)$, $j\in 1\wedge m_1$, while ${\bf f}_j$ contains the initial values and the approximate values of $f(\check{a}_1+jh_1,\cdot)$ on the spatial grid points (or $f(\check{a}_1+(j+1/2)h_1,\cdot)$ in Eqn \eqref{aaosystem}); 
	\begin{equation}
		\begin{aligned}
			&{\bf A}={\bf B}\otimes {\bf I}_N+{\bf I}_{J}\otimes {\bf T},\\
			&{\bf B}=\sum\limits_{i=1}^{d}\eta_{i}{\bf I}_{m_i^{-}}\otimes {\bf W}_{m_i}^{(\beta_i)}\otimes {\bf I}_{m_i^{+}},\quad \eta_{i}=\frac{c_{i}}{h_i^{\beta_i}},\label{allatonce}
		\end{aligned}
	\end{equation}
	`$\otimes$' denotes the Kronecker product; for any $k\in\mathbb{N}^{+}$, ${\bf I}_{k}$ denotes a $k\times k$ identity matrix; the lower triangular toeplitz matrix  ${\bf T}\in\mathbb{R}^{N\times N}$ denotes the temporal discretization matrix, its first column is $(l_{0}^{(\alpha)},l_{1}^{(\alpha)},\dots,l_{N-1}^{(\alpha)})^T$.
	For any $\beta\in(1,2)$ and any $m\in\mathbb{N}^{+}$, the symmetric Toeplitz matrix ${\bf W}_{m}^{(\beta)}\in\mathbb{R}^{m\times m}$ formed as
	\begin{equation*}
		{\bf W}_{m}^{(\beta)}=\left[
		\begin{array}
			[c]{ccccc}
			w_0^{(\beta)} & w_{1}^{(\beta)} &\ldots   &w_{m-2}^{(\beta)}  &w_{m-1}^{(\beta)}\\
			w_1^{(\beta)} & w_0^{(\beta)}&w_{1}^{(\beta)}   &\ldots &w_{m-2}^{(\beta)}\\
			\vdots&\ddots &\ddots&\ddots&\vdots\\
			w_{m-2}^{(\beta)} & \ldots&w_1^{(\beta)}& w_0^{(\beta)} &w_{1}^{(\beta)}\\
			w_{m-1}^{(\beta)} & w_{m-2}^{(\beta)}& \ldots & w_1^{(\beta)} & w_{0}^{(\beta)}
		\end{array}
		\right].
	\end{equation*}
	
	\begin{lemma}\label{wpo}
		For any $\beta\in(1,2)$ and any  $m\geq 1$,    $	{\bf W}_{m}^{(\beta)}$ is a positive definite matrix.
	\end{lemma}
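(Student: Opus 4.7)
The plan is to establish positive definiteness of $\mathbf{W}_m^{(\beta)}$ via the Gershgorin circle theorem, using that it is symmetric, has positive diagonal $w_0^{(\beta)}$, and has a Z-matrix off-diagonal pattern from Property \ref{wkprop}(i). Strict diagonal dominance, equivalent to positivity of every row sum, will then follow from Property \ref{wkprop}(ii); Property \ref{wkprop}(iii) will not actually be needed for this lemma.

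First I would reduce the goal to row-sum positivity. By Property \ref{wkprop}(i), $w_0^{(\beta)}>0$ and every off-diagonal entry $w_k^{(\beta)}\le 0$, so $|w_k^{(\beta)}|=-w_k^{(\beta)}$ for $k\ge 1$. Denoting by $R_i$ the $i$-th row sum, strict diagonal dominance in row $i$ is then equivalent to $R_i>0$. Since $\mathbf{W}_m^{(\beta)}$ is real symmetric, Gershgorin confines each eigenvalue $\lambda$ to some disk $|\lambda-w_0^{(\beta)}|\le w_0^{(\beta)}-R_i$, forcing $\lambda\ge R_i$; once every $R_i>0$ is established, the spectrum is real and positive, which gives positive definiteness.

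To prove $R_i>0$, write
\begin{equation*}
R_i \;=\; w_0^{(\beta)} + \sum_{k=1}^{i-1} w_k^{(\beta)} + \sum_{k=1}^{m-i} w_k^{(\beta)} \;=\; w_0^{(\beta)} + S_{i-1} + S_{m-i},
\end{equation*}
where $S_n:=\sum_{k=1}^{n} w_k^{(\beta)}$, and let $p=i-1$, $q=m-i$, assuming without loss of generality that $p\le q$. Since each $w_k^{(\beta)}\le 0$, the partial sums $S_n$ are non-increasing in $n$, so $S_p\ge S_q$ and hence $R_i \ge w_0^{(\beta)}+2S_q$. Applying Property \ref{wkprop}(ii) with the index $q+1$ identifies $w_0^{(\beta)}+2S_q = w_0^{(\beta)}+2\sum_{k=1}^{q}w_k^{(\beta)}$ as a quantity guaranteed to be strictly positive, so $R_i>0$.

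The only conceptual point worth flagging is that Property \ref{wkprop}(ii), which bounds only the ``one-sided'' tail $w_0^{(\beta)}+2\sum_{k=1}^{m-1} w_k^{(\beta)}$, already suffices to control every row sum, including the tighter middle ones; the monotonicity of non-positive partial sums converts the symmetric two-sided row sum into exactly the one-sided expression that the hypothesis provides.
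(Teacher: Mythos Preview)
Your proof is correct and follows essentially the same approach as the paper: strict diagonal dominance via Properties~\ref{wkprop}(i)--(ii) combined with the Gershgorin circle theorem. Your argument is in fact a bit more carefully written --- you apply Property~\ref{wkprop}(ii) at the row-specific index $q+1$ rather than at $m$, whereas the paper bounds every off-diagonal row sum crudely by $-2\sum_{k=1}^{m-1}w_k^{(\beta)}$ --- but the underlying idea is identical.
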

	\begin{proof}
		By Property \ref{wkprop} ${\bf (i)}-{\bf (ii)}$ and definition of $	{\bf W}_{m}^{(\beta)}$,  we have
		$$ \sum_{j=1,i\neq j}^{m}|	{\bf W}_{m}^{(\beta)}(i,j)|= \sum_{j=1}^{m-1} |w_j^{(\beta)}|<-2\sum\limits_{k=1}^{m-1}w_k^{(\beta)}<|w_0^{(\beta)} |={\bf W}_{m}^{(\beta)}(i,i),\ \text{ for} \  i=1,\cdots m.  $$
		Combining with Gershgorin circle theorem \footnote{see the definition of Gershgorin circle theorem in \cite{goluvan2013}}, 	Matrix $	{\bf W}_{m}^{(\beta)}$ is a positive definite matrix.
	\end{proof}
	Although matrix ${\bf W}_m^{(\beta)}$ and matrix ${\bf B}$ are positive definite symmetric matrices, the matrix ${\bf A}$ is an unsymmetric matrix due to matrix ${\bf T}$ is a lower tridiagonal matrix. Therefore, some popular iterative methods, like the conjugate gradient method and the Minimal Residual Method, are not applicable. We use the GMRES  method to solve this general nonsymmetric linear system.
	\section{\textbf{{The $\tau$ preconditioner and the convergence behavior of GMRES solver for the preconditioned system}}}
	In this section, our $\tau$ preconditioner is proposed for  the linear system \eqref{aaosystem}, which is used as a single-sided preconditioner. We employ GMRES solver to solve the single-sided preconditioned system. To investigate the convergence behavior of GMRES solver for the single-sided preconditioned system, we consider an auxiliary two-sided preconditioned system. We show that (i) GMRES solver for the single-sided preconditioned system has a convergence rate no worse than that for the auxiliary two-sided preconditioned system; (ii) the auxiliary two-sided preconditioned system has a condition number uniformly bounded by 3.
	
	We first define some notations that will be used later.
	For any real symmetric positive semi-definite matrix ${\bf H}\in\mathbb{R}^{k\times k}$, define $${\bf H}^{z}:={\bf V}^{\rm T}\diag[({\bf D}(i,i))^z]_{i=1}^{k}{\bf V},\quad z\in\mathbb{R},$$ where ${\bf H}={\bf V}^{\rm T}{\bf D}{\bf V}$ denotes the orthogonal diagonalization of ${\bf H}$.
	For a symmetric Toeplitz matrix ${\bf T}_m\in\mathbb{R}^{m\times m}$ with $(t_1,t_2,...,t_m)^{\rm T}\in\mathbb{R}^{m\times 1}$, define its $\tau$-matrix approximation as
	\begin{equation}\label{tauopdef}
		\tau({\bf T}_m):={\bf T}_m-{\bf H}_m,
	\end{equation}
	where ${\bf H}_m$ is a Hankel matrix with $(t_3,t_4,...,t_m,0,0)^{\rm T}$ as its first column and $(0,0,t_m,...,t_4,t_3)^{\rm T}$ as its last column. An interesting property of the $\tau$-matrix defined in \eqref{tauopdef}  is that it is diagonalizable by sine transform matrix, i.e.,
	\begin{equation}\label{taumatdiag}
		\tau({\bf T}_m)={\bf S}_m{\bf Q}{\bf S}_m,
	\end{equation}
	where ${\bf Q}={\rm diag}(q_i)_{i=1}^{m}$ is a diagonal matrix with
	\begin{equation}\label{sigmicomp}
		q_{i}=t_1+2\sum\limits_{j=2}^{m}t_j\cos\left(\frac{\pi i(j-1)}{m+1}\right),\quad  i\in 1\wedge m.
	\end{equation} 
	\begin{equation}\label{sinematdef}
		{\bf S}_{m}:= \left[\sqrt{\frac{2}{m+1}}\sin\left(\frac{\pi jk}{m+1}\right)\right]_{j,k=1}^{m}
	\end{equation}
	is a sine transform matrix. It is easy to verify that ${\bf S}_m$ is a real symmetric orthogonal matrix, i.e., ${\bf S}_m={\bf S}_m^{\rm T}={\bf S}_{m}^{-1}$. The product between matrix ${\bf S}_m$ and a given vector of length $m$ can be fast computed within $\mathcal{O}(m\log m)$ operations using { fast sine transform (FST)}. Let ${\bf e}_{m,i}\in\mathbb{R}^{m\times 1}$ denotes the $i$th column of the $m\times m$ identity matrix.
	We also note that the $m$ numbers $\{q_i\}_{i=1}^{m}$ defined in \eqref{sigmicomp} can be computed by
	\begin{equation*}
		(q_1;q_2;\cdots;q_m)={\rm diag}({\bf S}_m{\bf e}_{m,1})^{-1}[{\bf S}_m\tau({\bf T}_m){\bf e}_{m,1}].
	\end{equation*}
	From the equality above, we see that the computation of $\{q_i\}_{i=1}^{m}$ requires only a few FST, which requires $\mathcal{O}(m\log m)$ operations.
	
	With Eqn \eqref{taumatdiag}, we define an approximation ${\bf B}_{\tau}$ to ${\bf B}$ as follows
	\begin{equation}\label{btaudef}
		{\bf B}_{\tau}=\frac{\sqrt{3}}{2}\sum\limits_{i=1}^{d}\eta_i{\bf I}_{m_i^{-}}\otimes \tau({\bf W}_{m_i}^{(\beta_i)})\otimes {\bf I}_{m_i^{+}}.
	\end{equation}
	The reason why we put a factor $\frac{\sqrt{3}}{2}$ in the above definition will be explained in Subsection \ref{mainrsltsec}.
	From Eqn \eqref{taumatdiag}, \eqref{sigmicomp} and properties of the single-dimension sine transform matrix ${\bf S}_m$, we see that ${\bf B}_{\tau}$ is diagonalizable by a multi-dimension sine transform matrix, i.e.,
	\begin{align}
		&{\bf B}_{\tau}=\frac{\sqrt{3}}{2}{\bf S}{\bf \Lambda}{\bf S},\label{btaudiag}\\
		&{\bf S}:=\bigotimes\limits_{i=1}^{d}{\bf S}_{m_i},\quad {\bf \Lambda}=\sum\limits_{i=1}^{d}{\bf I}_{m_i^{-}}\otimes{\bf \Lambda}_i\otimes {\bf I}_{m_i^{+}},\quad {\bf \Lambda}_i={\rm diag}(\lambda_{i,j})_{j=1}^{m_i},\notag\\
		&\lambda_{i,j}=\frac{1}{h^{\beta_i}}w_{0}^{(\beta_i)}+\frac{2}{h^{\beta_i}}\sum\limits_{k=2}^{m_i}w_{k-1}^{(\beta_i)}\cos\left(\frac{\pi j(k-1)}{m_i+1}\right)\label{lambdaijdef}.
	\end{align}
	
	Then, our $\tau$ preconditioner is defined as
	\begin{equation}\label{pdef}
		{\bf P}:={\bf I}_J\otimes{\bf T}+{\bf B}_{\tau}\otimes {\bf I}_N.
	\end{equation}
	Instead of solving \eqref{aaosystem}, we employ the GMRES solver to solve the following equivalent system  by
	\begin{equation}\label{prewhole}
		{\bf P}^{-1}{\bf A}{\bf u}= {\bf P}^{-1}{\bf f}.
	\end{equation}
	
	\subsection{Convergence behavior of GMRES solver for system \eqref{prewhole}} \label{mainrsltsec}
	This section introduces a two-sided variant of \eqref{prewhole} as an auxiliary problem (see \eqref{twosidedpreclinsys}), which helps us understand the convergence behavior of GMRES solver for \eqref{prewhole}. 
	
	It is straightforward to verify that the solution of \eqref{prewhole} is equivalent to the following two steps.
	\begin{align}
		&{\bf P}_l^{-1}{\bf A}{\bf P}_r^{-1}\hat{\bf u}={\bf P}_l^{-1}{\bf f},\label{twosidedpreclinsys}
	\end{align}
	where the solution of \eqref{prewhole} and the solution of \eqref{twosidedpreclinsys} are related by ${\bf u}={\bf P}_{r}^{-1}\hat{\bf u}$; 
	\begin{equation*}
		{\bf P}_l={\bf B}_{\tau}^{-\frac{1}{2}}\otimes{\bf T}+{\bf B}_{\tau}^{\frac{1}{2}}\otimes{\bf I}_N,\quad {\bf P}_r={\bf B}_{\tau}^{\frac{1}{2}}\otimes{\bf I}_N.
	\end{equation*}
	
	In this section, we will show that (i) the GMRES solver for \eqref{prewhole} converges no slower than that for \eqref{twosidedpreclinsys}; (ii) \eqref{twosidedpreclinsys} is a well-conditioned system, i.e., $\sup\limits_{N,J>0}\kappa_2({\bf P}_l^{-1}{\bf A}{\bf P}_r^{-1})<+\infty$. Here, $\kappa_2({\bf C}):=||{\bf C}||_2||{\bf C}^{-1}||_2$ is called condition number of an invertible  matrix ${\bf C}$. Typically, Krylov subspace solvers converge quickly for well-conditioned system. Indeed, the numerical results in \cite{lin2021parallel,zhao2023bilateral} show that GMRES solver for the two-sided preconditioned system converges in a few iteration number no matter how large the matrix size is. Hence, a theoretical result that GMRES solver for \eqref{prewhole} converges no slower than that for \eqref{twosidedpreclinsys} guarantees that the GMRES solver for \eqref{prewhole} is a fast solver which further improves the iteration number and computational time compared with the two-sided preconditioning method.

	For any Hermitian matrices ${\bf C}_1,{\bf C}_2\in\mathbb{R}^{k\times k}$, denote ${\bf C}_2 \succ ({\rm or} \succeq) \ {\bf C}_1$
	if ${\bf C}_2-{\bf C}_1$ is positive definite (or semi-definite). Especially, we denote ${\bf C}_2 \succ ({\rm or} \succeq) \ {\bf O}$, if ${\bf C}_2$ itself is positive definite (or semi-definite).
	Also, ${\bf C}_1 \prec ({\rm or} \preceq) \ {\bf C}_2$ and ${\bf O} \prec ({\rm or} \preceq) \ {\bf C}_2$  have the same meanings as those of ${\bf C}_2 \succ ({\rm or} \succeq) \ {\bf C}_1$ and ${\bf C}_2 \succ ({\rm or} \succeq) \ {\bf O}$, respectively.
	
	For a Hermitian matrix ${\bf H}$, denote by $\lambda_{\min}({\bf H})$ and $\lambda_{\max}({\bf H})$, the minimal and the maximal eigenvalues of ${\bf H}$, respectively.
	\begin{proposition}\label{btauspdprops}
		The Matrices ${\bf \Lambda}\succ {\bf O}$,  ${\bf B}_{\tau}\succ {\bf O}$ and ${\bf B}\succ{\bf O}$. And $\inf\limits_{J>0}\lambda_{\min}({\bf B}_{\tau})\geq \check{c}>0,$ where
		$$
		\check{c}:=\frac{\sqrt{3}}{2}\sum\limits_{i=1}^{d}\frac{1}{(\hat{a}_i-\check{a}_i)^{\beta_i}}\inf \limits_{m\geq 1}(m+1)^{\beta_i}
		\left(w_{0}^{(\beta_i)}+2\sum\limits_{k=1}^{m-1}w_{k}^{(\beta_i)}\right).
		$$
	\end{proposition}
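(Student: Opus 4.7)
The plan is to handle the three positivity claims first and then derive the explicit lower bound. For $\mathbf{B} \succ \mathbf{O}$, I would just invoke Lemma~\ref{wpo} (which gives $\mathbf{W}_{m_i}^{(\beta_i)} \succ \mathbf{O}$) together with the elementary facts that the Kronecker product of positive definite matrices is positive definite and that a sum of such matrices with positive coefficients $\eta_i$ is positive definite. For $\mathbf{B}_\tau \succ \mathbf{O}$ and $\mathbf{\Lambda} \succ \mathbf{O}$, I would exploit the diagonalization~\eqref{btaudiag}: since $\mathbf{S}$ is orthogonal, $\mathbf{B}_\tau \succ \mathbf{O}$ is equivalent to $\mathbf{\Lambda} \succ \mathbf{O}$, so the task reduces to showing every eigenvalue $\lambda_{i,j}$ in~\eqref{lambdaijdef} is strictly positive.

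To bound $\lambda_{i,j}$ from below, I would use Property~\ref{wkprop}(i), which ensures $w_{k-1}^{(\beta_i)} \le 0$ for $k \ge 2$, so $\cos\!\bigl(\pi j(k-1)/(m_i+1)\bigr) \le 1$ gives $w_{k-1}^{(\beta_i)}\cos\!\bigl(\pi j(k-1)/(m_i+1)\bigr) \ge w_{k-1}^{(\beta_i)}$. Summing over $k$ and re-indexing yields
\begin{equation*}
\lambda_{i,j} \;\ge\; \frac{1}{h_i^{\beta_i}}\!\left(w_0^{(\beta_i)}+2\sum_{k=1}^{m_i-1}w_k^{(\beta_i)}\right).
\end{equation*}
Substituting $h_i = (\hat{a}_i-\check{a}_i)/(m_i+1)$ converts $1/h_i^{\beta_i}$ into $(m_i+1)^{\beta_i}/(\hat{a}_i-\check{a}_i)^{\beta_i}$, and Property~\ref{wkprop}(ii) then yields the mesh-independent lower bound
\begin{equation*}
\lambda_{i,j} \;\ge\; \frac{1}{(\hat{a}_i-\check{a}_i)^{\beta_i}}\,\inf_{m\ge 1}(m+1)^{\beta_i}\!\left(w_0^{(\beta_i)}+2\sum_{k=1}^{m-1}w_k^{(\beta_i)}\right) \;>\; 0,
\end{equation*}
which proves $\mathbf{\Lambda} \succ \mathbf{O}$ and hence $\mathbf{B}_\tau \succ \mathbf{O}$.

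Finally, because $\mathbf{\Lambda} = \sum_{i=1}^d \mathbf{I}_{m_i^-}\otimes \mathbf{\Lambda}_i\otimes \mathbf{I}_{m_i^+}$, its eigenvalues are indexed by $(j_1,\ldots,j_d)$ and equal $\sum_{i=1}^d \lambda_{i,j_i}$. Passing through the factor $\sqrt{3}/2$ in~\eqref{btaudiag} and summing the per-direction bounds obtained above gives exactly $\lambda_{\min}(\mathbf{B}_\tau) \ge \check{c}$, uniform in the multi-index $J$. The main obstacle is essentially bookkeeping: aligning the indexing in~\eqref{sigmicomp} and~\eqref{lambdaijdef} and making sure the factor-of-two in Property~\ref{wkprop}(ii) matches the factor in the cosine sum; beyond that, the argument is a direct application of monotonicity of cosines against non-positive coefficients combined with the scaling $h_i(m_i+1) = \hat{a}_i-\check{a}_i$.
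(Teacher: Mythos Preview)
Your proposal is correct and follows essentially the same route as the paper's own proof: both use Property~\ref{wkprop}(i) to replace the cosine factors by $1$ against the non-positive weights $w_{k-1}^{(\beta_i)}$, then substitute $h_i=(\hat a_i-\check a_i)/(m_i+1)$ and invoke Property~\ref{wkprop}(ii) to obtain the uniform lower bound, sum over directions with the factor $\sqrt{3}/2$ to get $\check c$, and finally use Lemma~\ref{wpo} for $\mathbf{B}\succ\mathbf{O}$. The only cosmetic difference is ordering (you treat $\mathbf{B}$ first, the paper treats it last).
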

	\begin{proof}
		By equation \eqref{btaudiag}, we know that the spectrum of ${\bf B}_{\tau}$ consists of
		\begin{equation*}
			\mathscr{S}:=\left\{\frac{\sqrt{3}}{2}\sum\limits_{i=1}^{d}\lambda_{i,j_i}\bigg| 1\leq j_i\leq m_i,~1\leq i\leq d \right\}.
		\end{equation*}
		By \eqref{lambdaijdef} and Property \ref{wkprop}, we have
		\begin{align*}
			\lambda_{i,j_i}&=\frac{1}{h_i^{\beta_i}}w_{0}^{(\beta_i)}+\frac{2}{h_i^{\beta_i}}\sum\limits_{k=2}^{m_i}w_{k-1}^{(\beta_i)}\cos\left(\frac{\pi j_i(k-1)}{m_i+1}\right)\\
			&\geq \frac{1}{h_i^{\beta_i}}w_{0}^{(\beta_i)}+\frac{2}{h_i^{\beta_i}}\sum\limits_{k=2}^{m_i}w_{k-1}^{(\beta_i)}\\
			&=\frac{1}{(\hat{a}_i-\check{a}_i)^{\beta_i}}\times (m_i+1)^{\beta_i}
			\left(w_{0}^{(\beta_i)}+2\sum\limits_{k=1}^{m_i-1}w_{k}^{(\beta_i)}\right)\\
			&\geq \frac{1}{(\hat{a}_i-\check{a}_i)^{\beta_i}}\times\inf \limits_{m\geq 1}(m+1)^{\beta_i}
			\left(w_{0}^{(\beta_i)}+2\sum\limits_{k=1}^{m-1}w_{k}^{(\beta_i)}\right)\\
			&>0.
		\end{align*}
		Hence,
		\begin{equation*}
			\lambda_{\min}({\bf B}_{\tau})=\min\limits_{y\in\mathscr{S}}y\geq \frac{\sqrt{3}}{2}\sum\limits_{i=1}^{d}\frac{1}{(\hat{a}_i-\check{a}_i)^{\beta_i}}\inf \limits_{m\geq 1}(m+1)^{\beta_i}
			\left(w_{0}^{(\beta_i)}+2\sum\limits_{k=1}^{m-1}w_{k}^{(\beta_i)}\right)=\check{c}.
		\end{equation*}
		As the inequality above holds for arbitrary $J>0$, 
		\begin{equation*}
			\inf\limits_{J>0}\lambda_{\min}({\bf B}_{\tau})\geq \check{c}.
		\end{equation*}
		As indicated by Property \ref{wkprop}${\bf (ii)}$, $\inf \limits_{m\geq 1}(m+1)^{\beta_i}
		\left(w_{0}^{(\beta_i)}+2\sum\limits_{k=1}^{m-1}w_{k}^{(\beta_i)}\right)>0$ for each $i$. Therefore, $\check{c}>0$.
		Hence, ${\bf \Lambda }\succ {\bf O}$ and ${\bf B}_{\tau}\succ {\bf O}$.
		
		Finally, with Lemma \ref{wpo}, it is easy to see that ${\bf B}\succ {\bf O}$. The proof is completed.
	\end{proof}

	The convergence behavior of GMRES is closely related to the Krylov subspace. For a square matrix $\mathbf{E}\in \mathbb{R}^{m \times m}$ and a vector ${\bf x}\in \mathbb{R}^{m\times 1}$, a Krylov subspace of degree $j\geq 1$ is defined as follows
	$$\mathcal{K}_j(\mathbf{E},{\bf x}):=\text{span}\{{\bf x},\mathbf{E}{\bf x},\mathbf{E}^2{\bf x},\dots,\mathbf{E}^{j-1}{\bf x} \}.$$
	For a set $\mathcal{S}$ and a point $z$, we denote
	$$z+\mathcal{S}:=\{z+x|x\in \mathcal{S}\}.$$
	We recall the relation between the iterative solution by GMRES and the Krylov subspace in the following lemma.
	\begin{lemma}\textnormal{(see, e.g., \cite{saad2003iterative})}\label{krylov}
		For a non-singular $m\times m$ real linear system ${\bf Z} y = {\bf b}$, let $y_j$ be the iterative solution by GMRES at $j$-th ($j\geq1$) iteration step with $y_0$ as initial guess. Then, the $j$-th iteration solution $y_j$ minimize the residual error over the Krylov subspace $\mathcal{K}_j({\bf Z},{\bf r}_0)$ with ${\bf r}_0= {\bf b}-{\bf Z}y_0,$ i.e.,	
		$$
		\mathbf{y}_{j}=\underset{\mathbf{v} \in \mathbf{y}_{0}+\mathcal{K}_{j}\left(\mathbf{Z}, \mathbf{r}_{0}\right)}{\arg \min }\|\mathbf{b}-\mathbf{Z} \mathbf{v}\|_{2} .
		$$
	\end{lemma}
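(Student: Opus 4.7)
The plan is to derive the minimization property directly from the algorithmic definition of GMRES via the Arnoldi process, which is the most transparent route to the variational characterization stated in the lemma.

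First, I would invoke the Arnoldi process starting from $v_1 = {\bf r}_0/\|{\bf r}_0\|_2$. After $j$ well-defined steps, this produces a matrix ${\bf V}_j \in \mathbb{R}^{m\times j}$ with orthonormal columns spanning $\mathcal{K}_j({\bf Z},{\bf r}_0)$, together with the Arnoldi relation ${\bf Z}{\bf V}_j = {\bf V}_{j+1}\bar{{\bf H}}_j$, where $\bar{{\bf H}}_j \in \mathbb{R}^{(j+1)\times j}$ is an upper Hessenberg matrix and ${\bf V}_{j+1}$ extends ${\bf V}_j$ with one more orthonormal column. Any vector ${\bf v}\in {\bf y}_0 + \mathcal{K}_j({\bf Z},{\bf r}_0)$ can then be written uniquely as ${\bf v} = {\bf y}_0 + {\bf V}_j \xi$ for some $\xi \in \mathbb{R}^j$.

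Substituting into the residual gives ${\bf b} - {\bf Z}{\bf v} = {\bf r}_0 - {\bf Z}{\bf V}_j\xi = \|{\bf r}_0\|_2 {\bf V}_{j+1} e_1 - {\bf V}_{j+1}\bar{{\bf H}}_j\xi = {\bf V}_{j+1}\bigl(\|{\bf r}_0\|_2 e_1 - \bar{{\bf H}}_j\xi\bigr)$. Because ${\bf V}_{j+1}$ has orthonormal columns, it is an isometry in the $2$-norm, so $\|{\bf b} - {\bf Z}{\bf v}\|_2 = \|\,\|{\bf r}_0\|_2 e_1 - \bar{{\bf H}}_j\xi\|_2$. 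The algorithmic definition of GMRES is precisely to pick $\xi_j = \arg\min_\xi \|\,\|{\bf r}_0\|_2 e_1 - \bar{{\bf H}}_j\xi\|_2$ and set ${\bf y}_j = {\bf y}_0 + {\bf V}_j\xi_j$; combining this with the isometric equality gives the desired variational characterization.

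The main technical point to handle is potential breakdown of the Arnoldi process before step $j$. I would treat this by the standard happy-breakdown argument: if Arnoldi terminates at some step $k<j$ producing an invariant subspace, then the exact solution already lies in ${\bf y}_0 + \mathcal{K}_k({\bf Z},{\bf r}_0) \subseteq {\bf y}_0 + \mathcal{K}_j({\bf Z},{\bf r}_0)$ (using that ${\bf Z}$ is nonsingular), so the minimizer on the right-hand side attains residual zero and coincides with ${\bf y}_j$. Since this is the standard textbook lemma cited from Saad, I would keep the proof at sketch level and reference \cite{saad2003iterative} for the detailed Arnoldi construction and the well-posedness of the least-squares subproblem.
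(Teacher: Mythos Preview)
Your proposal is correct and follows the standard textbook derivation via the Arnoldi relation and the isometric reduction to the small least-squares problem. Note, however, that the paper does not give its own proof of this lemma at all: it is stated with the citation ``(see, e.g., \cite{saad2003iterative})'' and used as a known fact, so there is nothing to compare against beyond observing that your sketch is precisely the argument one finds in the cited reference.
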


	\begin{theorem}\label{residual}
		Let $\hat{\mathbf{u}}_{0}$ be the initial guess for \eqref{twosidedpreclinsys}. Let $\mathbf{u}_{0}:=\mathbf{P}_{r}^{-1} \hat{\mathbf{u}}_{0}$ be the initial guess for \eqref{prewhole}. Let $\mathbf{u}_{j}\left(\hat{\mathbf{u}}_{j}\right.$, respectively$)$ be the $j$-th $(j \geq 1)$ iteration solution derived by applying GMRES solver to \eqref{prewhole} (\eqref{twosidedpreclinsys}, respectively) with $\mathbf{u}_{0}\left(\hat{\mathbf{u}}_{0}\right.$, respectively) as initial guess. Then,
		$$
		\left\|\mathbf{r}_{j}\right\|_{2} \leq \frac{1}{\sqrt{\check{c}}}\left\|\hat{\mathbf{r}}_{j}\right\|_{2}
		$$
		where $\mathbf{r}_{j}:=\mathbf{P}^{-1} \mathbf{f}-\mathbf{P}^{-1} \mathbf{A} \mathbf{u}_{j}\left(\hat{\mathbf{r}}_{j}:=\mathbf{P}_{l}^{-1} \mathbf{f}-\mathbf{P}_{l}^{-1} \mathbf{A} \mathbf{P}_{r}^{-1} \hat{\mathbf{u}}_{j}\right.$, respectively$)$ denotes the residual vector at $j$-th GMRES iteration for \eqref{prewhole} (\eqref{twosidedpreclinsys}, respectively); $\check{c}>0$ defined in Proposition \ref{btauspdprops}  is a constant independent of $N$ and $J$.
	\end{theorem}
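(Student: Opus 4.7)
The plan is to exploit the fact that $\mathbf{P}$ factors as $\mathbf{P}_l\mathbf{P}_r$ (a direct check on the definitions shows $\mathbf{P}_l\mathbf{P}_r = \mathbf{I}_J\otimes\mathbf{T} + \mathbf{B}_\tau\otimes\mathbf{I}_N$), so that the single-sided and two-sided preconditioned coefficient matrices are similar via $\mathbf{P}_r$:
\[
\mathbf{P}^{-1}\mathbf{A} \;=\; \mathbf{P}_r^{-1}\bigl(\mathbf{P}_l^{-1}\mathbf{A}\mathbf{P}_r^{-1}\bigr)\mathbf{P}_r.
\]
This similarity lets me convert the GMRES analysis of \eqref{prewhole} into the analysis of \eqref{twosidedpreclinsys} at the cost of a single factor $\|\mathbf{P}_r^{-1}\|_2$.

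First I would verify the initial-residual relation $\mathbf{r}_0 = \mathbf{P}_r^{-1}\hat{\mathbf{r}}_0$. Using $\mathbf{u}_0 = \mathbf{P}_r^{-1}\hat{\mathbf{u}}_0$,
\[
\mathbf{r}_0 = \mathbf{P}_r^{-1}\mathbf{P}_l^{-1}\mathbf{f} - \mathbf{P}_r^{-1}\mathbf{P}_l^{-1}\mathbf{A}\mathbf{P}_r^{-1}\hat{\mathbf{u}}_0 = \mathbf{P}_r^{-1}\hat{\mathbf{r}}_0.
\]
Combined with the similarity above, this gives $(\mathbf{P}^{-1}\mathbf{A})^k\mathbf{r}_0 = \mathbf{P}_r^{-1}(\mathbf{P}_l^{-1}\mathbf{A}\mathbf{P}_r^{-1})^k\hat{\mathbf{r}}_0$ for every $k\geq 0$, so $\mathcal{K}_j(\mathbf{P}^{-1}\mathbf{A},\mathbf{r}_0) = \mathbf{P}_r^{-1}\mathcal{K}_j(\mathbf{P}_l^{-1}\mathbf{A}\mathbf{P}_r^{-1},\hat{\mathbf{r}}_0)$.

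Next I would invoke the standard polynomial characterization of the GMRES residual (Lemma \ref{krylov}): $\|\mathbf{r}_j\|_2 = \min_{p\in\mathcal{P}_j,\,p(0)=1}\|p(\mathbf{P}^{-1}\mathbf{A})\mathbf{r}_0\|_2$, and similarly for $\hat{\mathbf{r}}_j$ with optimal polynomial $\hat{p}_j$. Using $\hat{p}_j$ as a (generally suboptimal) choice for the single-sided problem,
\[
\|\mathbf{r}_j\|_2 \;\le\; \|\hat{p}_j(\mathbf{P}^{-1}\mathbf{A})\mathbf{r}_0\|_2 \;=\; \bigl\|\mathbf{P}_r^{-1}\hat{p}_j(\mathbf{P}_l^{-1}\mathbf{A}\mathbf{P}_r^{-1})\hat{\mathbf{r}}_0\bigr\|_2 \;=\; \|\mathbf{P}_r^{-1}\hat{\mathbf{r}}_j\|_2 \;\le\; \|\mathbf{P}_r^{-1}\|_2\,\|\hat{\mathbf{r}}_j\|_2,
\]
where the first equality uses the similarity identity applied termwise to $\hat{p}_j$ and the second uses the polynomial characterization of $\hat{\mathbf{r}}_j$.

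Finally, because $\mathbf{P}_r = \mathbf{B}_\tau^{1/2}\otimes\mathbf{I}_N$, we have $\|\mathbf{P}_r^{-1}\|_2 = \lambda_{\min}(\mathbf{B}_\tau)^{-1/2}$, and Proposition \ref{btauspdprops} furnishes the uniform bound $\lambda_{\min}(\mathbf{B}_\tau)\ge \check{c}>0$, yielding $\|\mathbf{P}_r^{-1}\|_2\le 1/\sqrt{\check c}$ and the claim. I do not anticipate a serious obstacle; the only subtlety is making sure the minimizing polynomial for one system can legitimately be plugged into the other, which is precisely where the similarity $\mathbf{P}^{-1}\mathbf{A}\sim \mathbf{P}_l^{-1}\mathbf{A}\mathbf{P}_r^{-1}$ and the identity $\mathbf{r}_0 = \mathbf{P}_r^{-1}\hat{\mathbf{r}}_0$ must be used in concert.
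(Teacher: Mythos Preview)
Your proof is correct and follows essentially the same approach as the paper's. Both arguments hinge on the factorization $\mathbf{P}=\mathbf{P}_l\mathbf{P}_r$ (hence the similarity $\mathbf{P}^{-1}\mathbf{A}=\mathbf{P}_r^{-1}(\mathbf{P}_l^{-1}\mathbf{A}\mathbf{P}_r^{-1})\mathbf{P}_r$), the identification of the two Krylov subspaces up to the factor $\mathbf{P}_r^{-1}$, GMRES optimality, and the bound $\|\mathbf{P}_r^{-1}\|_2\le \check{c}^{-1/2}$ from Proposition~\ref{btauspdprops}; the only cosmetic difference is that you phrase the GMRES minimization via the residual polynomial $p$ with $p(0)=1$, whereas the paper uses the equivalent affine-subspace formulation of Lemma~\ref{krylov} and plugs in the candidate $\mathbf{P}_r^{-1}\hat{\mathbf{u}}_j\in \mathbf{u}_0+\mathcal{K}_j(\mathbf{P}^{-1}\mathbf{A},\mathbf{r}_0)$.
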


	\begin{proof}
		By applying Lemma \ref{krylov} to \eqref{twosidedpreclinsys}, we see that
		$$
		\hat{\mathbf{u}}_{j}-\hat{\mathbf{u}}_{0} \in \mathcal{K}_{j}\left(\mathbf{P}_{l}^{-1} \mathbf{A P}_{r}^{-1}, \hat{\mathbf{r}}_{0}\right),
		$$
		where $\hat{\mathbf{r}}_{0}=\mathbf{P}_{l}^{-1} \mathbf{f}-\mathbf{P}_{l}^{-1} \mathbf{A P}_{r}^{-1} \hat{\mathbf{u}}_{0}$. Notice that $\left(\mathbf{P}_{l}^{-1} \mathbf{A P}_{r}^{-1}\right)^{k}=\mathbf{P}_{r}\left(\mathbf{P}^{-1} \mathbf{A}\right)^{k} \mathbf{P}_{r}^{-1}$ for each $k \geq 0$. Therefore,
		$$
		\begin{aligned}
			\mathcal{K}_{j}\left(\mathbf{P}_{l}^{-1} \mathbf{A} \mathbf{P}_{r}^{-1}, \hat{\mathbf{r}}_{0}\right) &=\operatorname{span}\left\{\left(\mathbf{P}_{l}^{-1} \mathbf{A} \mathbf{P}_{r}^{-1}\right)^{k}\left(\mathbf{P}_{l}^{-1} \mathbf{f}-\mathbf{P}_{l}^{-1} \mathbf{A} \mathbf{P}_{r}^{-1} \hat{\mathbf{u}}_{0}\right)\right\}_{k=0}^{j-1} \\
			&=\operatorname{span}\left\{\mathbf{P}_{r}\left(\mathbf{P}^{-1} \mathbf{A}\right)^{k} \mathbf{P}_{r}^{-1}\left(\mathbf{P}_{l}^{-1} \mathbf{f}-\mathbf{P}_{l}^{-1} \mathbf{A P}_{r}^{-1} \hat{\mathbf{u}}_{0}\right)\right\}_{k=0}^{j-1} \\
			&=\operatorname{span}\left\{\mathbf{P}_{r}\left(\mathbf{P}^{-1} \mathbf{A}\right)^{k}\left(\mathbf{P}^{-1} \mathbf{f}-\mathbf{P}^{-1} \mathbf{A} \mathbf{u}_{0}\right)\right\}_{k=0}^{j-1}
		\end{aligned}
		$$
		where the last equality comes from the facts that $\mathbf{P}=\mathbf{P}_{l} \mathbf{P}_{r}$ and that $\mathbf{u}_{0}=\mathbf{P}_{r}^{-1} \hat{\mathbf{u}}_{0}$.
		Therefore,
		$$
		\mathbf{P}_{r}^{-1} \hat{\mathbf{u}}_{j}-\mathbf{u}_{0}=\mathbf{P}_{r}^{-1}\left(\hat{\mathbf{u}}_{j}-\hat{\mathbf{u}}_{0}\right) \in \operatorname{span}\left\{\left(\mathbf{P}^{-1} \mathbf{A}\right)^{k}\left(\mathbf{P}^{-1} \mathbf{f}-\mathbf{P}^{-1} \mathbf{A} \mathbf{u}_{0}\right)\right\}_{k=0}^{j-1}=\mathcal{K}_{j}\left(\mathbf{P}^{-1} \mathbf{A}, \mathbf{r}_{0}\right),
		$$
		where $\mathbf{r}_{0}=\mathbf{P}^{-1} \mathbf{f}-\mathbf{P}^{-1} \mathbf{A u}_{0}$. In other words,
		$$
		\mathbf{P}_{r}^{-1} \hat{\mathbf{u}}_{j} \in \mathbf{u}_{0}+\mathcal{K}_{j}\left(\mathbf{P}^{-1} \mathbf{A}, \mathbf{r}_{0}\right) .
		$$
		By applying Lemma \ref{krylov} to \eqref{prewhole}, we know that
		$$
		\mathbf{u}_{j}=\underset{\mathbf{v} \in \mathbf{u}_{0}+\mathcal{K}_{j}\left(\mathbf{P}^{-1} \mathbf{A}, \mathbf{r}_{0}\right)}{\arg \min }\left\|\mathbf{P}^{-1} \mathbf{f}-\mathbf{P}^{-1} \mathbf{A} \mathbf{v}\right\|_{2} .
		$$
		Therefore,
		$$
		\begin{aligned}
			\left\|\mathbf{r}_{j}\right\|_{2}=\left\|\mathbf{P}^{-1} \mathbf{f}-\mathbf{P}^{-1} \mathbf{A} \mathbf{u}_{j}\right\|_{2} & \leq\left\|\mathbf{P}^{-1} \mathbf{f}-\mathbf{P}^{-1} \mathbf{A} \mathbf{P}_{r}^{-1} \hat{\mathbf{u}}_{j}\right\|_{2} \\
			&=\left\|\mathbf{P}_{r}^{-1} \hat{\mathbf{r}}_{j}\right\|_{2} \\
			&=\sqrt{\hat{\mathbf{r}}_{j}^{\mathrm{T}} \mathbf{P}_{r}^{-2} \hat{\mathbf{r}}_{j}} \\
			&=\sqrt{\hat{\mathbf{r}}_{j}^{\mathrm{T}}\left(\mathbf{B}_{\tau}^{-1} \otimes \mathbf{I}_{N}\right) \hat{\mathbf{r}}_{j}} \\
			& \leq \sqrt{\frac{1}{\check{c}} \hat{\mathbf{r}}_{j}^{\mathrm{T}} \hat{\mathbf{r}}_{j}}=\frac{1}{\sqrt{\check{c}}}\left\|\hat{\mathbf{r}}_{j}\right\|_{2},
		\end{aligned}
		$$
		where the last inequality comes from Proposition \ref{btauspdprops}.
	\end{proof}
	
	The residuals relationship between our single-sided preconditioner \eqref{prewhole} and the two-sided preconditioner method \eqref{twosidedpreclinsys} is proven in Theorem \ref{residual}. In the next, we estimate the condition number of the matrix ${\bf P}_{l}^{-1}{\bf A}{\bf P}_{r}^{-1}$ in \eqref{twosidedpreclinsys}.
	
	For a square matrix ${\bf C}$, denote by $\sigma({\bf C})$, the spectrum of ${\bf C}$.

	\begin{lemma}\label{taumatprecspectralm} 
		For any $\beta\in(1,2)$ and any $m\in\mathbb{N}^{+}$, $\sigma(\tau({\bf W}_{m}^{(\beta)})^{-1}{\bf W}_{m}^{(\beta)})\subset(1/2,3/2)$. 
	\end{lemma}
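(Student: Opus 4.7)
The plan is to reduce the inclusion to the pair of positive-definiteness statements
\[
\tau({\bf W}_m^{(\beta)}) + 2{\bf H} \succ {\bf O}
\qquad\text{and}\qquad
\tau({\bf W}_m^{(\beta)}) - 2{\bf H} \succ {\bf O},
\]
where ${\bf H} := {\bf W}_m^{(\beta)} - \tau({\bf W}_m^{(\beta)})$ is the Hankel matrix in \eqref{tauopdef}, and to establish each by the Gershgorin (strict-diagonal-dominance) argument already used in the proof of Lemma \ref{wpo}. Since $\tau({\bf W}_m^{(\beta)}) \succ {\bf O}$ (from \eqref{sigmicomp} and Property \ref{wkprop}, in analogy with the proof of Proposition \ref{btauspdprops}), the spectrum of $\tau({\bf W}_m^{(\beta)})^{-1}{\bf W}_m^{(\beta)}$ is real, and $\sigma(\tau({\bf W}_m^{(\beta)})^{-1}{\bf W}_m^{(\beta)}) \subset (1/2, 3/2)$ is equivalent to $\tfrac{1}{2}\tau({\bf W}_m^{(\beta)}) \prec {\bf W}_m^{(\beta)} \prec \tfrac{3}{2}\tau({\bf W}_m^{(\beta)})$, which upon writing ${\bf W}_m^{(\beta)} = \tau({\bf W}_m^{(\beta)}) + {\bf H}$ rearranges exactly into the two displayed statements.

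The pivot is a row-sum identity. Reading off the Hankel entries of ${\bf H}$ (its $(i,j)$-entry equals $w_{i+j}^{(\beta)}$, $0$, or $w_{2m-i-j+2}^{(\beta)}$ according to whether $i+j \le m-1$, $i+j \in \{m,m+1,m+2\}$, or $i+j \ge m+3$) and telescoping the partial sums of $w_k^{(\beta)}$ along row $i$, I obtain
\begin{equation*}
\sum_{j=1}^m \bigl({\bf W}_m^{(\beta)} + {\bf H}\bigr)_{i,j}
\;=\;
w_0^{(\beta)} + 2\sum_{k=1}^{m-1} w_k^{(\beta)} - w_i^{(\beta)} - w_{m-i+1}^{(\beta)}
\end{equation*}
under the convention $w_j^{(\beta)} := 0$ for $j \ge m$; by Property \ref{wkprop}(i)--(ii) this is bounded below by $w_0^{(\beta)} + 2\sum_{k=1}^{m-1} w_k^{(\beta)} > 0$. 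For the first target, ${\bf W}_m^{(\beta)} + {\bf H}$ is a symmetric Z-matrix (all off-diagonal entries are non-positive by Property \ref{wkprop}(i)), so positive row sums yield strict diagonal dominance and hence $\tau({\bf W}_m^{(\beta)}) + 2{\bf H} \succ {\bf O}$ by Gershgorin. For the second target, $\tau({\bf W}_m^{(\beta)}) - 2{\bf H}$ is no longer of Z-type: each off-diagonal entry $\tau_{ij} - 2{\bf H}_{ij}$ combines the non-positive $\tau_{ij}$ (non-positivity here uses Property \ref{wkprop}(iii)) with the non-negative $-2{\bf H}_{ij}$. Applying the entry-wise triangle inequality $|\tau_{ij} - 2{\bf H}_{ij}| \le -\tau_{ij} - 2{\bf H}_{ij}$ and noting $(\tau({\bf W}_m^{(\beta)}) - 2{\bf H})_{ii} = w_0^{(\beta)} - 3{\bf H}_{ii}$, strict diagonal dominance of row $i$ reduces to the inequality $\sum_j ({\bf W}_m^{(\beta)})_{i,j} + \sum_j {\bf H}_{i,j} > 4{\bf H}_{ii}$, whose left-hand side is the displayed positive quantity and whose right-hand side is non-positive by Property \ref{wkprop}(i); Gershgorin again supplies $\tau({\bf W}_m^{(\beta)}) - 2{\bf H} \succ {\bf O}$.

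The main obstacle is the second estimate: because $\tau({\bf W}_m^{(\beta)}) - 2{\bf H}$ is not a Z-matrix, one cannot read its off-diagonal absolute-value sum from the signed row sum directly and must accept the loss incurred by the triangle inequality. The pleasant feature that closes the argument is that this triangle-inequality upper bound is still governed by \emph{exactly} the same combined row sum $\sum_j ({\bf W}_m^{(\beta)})_{i,j} + \sum_j {\bf H}_{i,j}$ as the first estimate, so Property \ref{wkprop}(ii) supplies both positive-definiteness claims simultaneously without any further strengthening.
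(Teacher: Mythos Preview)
Your proof is correct and rests on the same computational core as the paper's: both arguments reduce to the single row-sum inequality $\sum_j({\bf W}_m^{(\beta)}+{\bf H})_{ij}\ge w_0^{(\beta)}+2\sum_{k=1}^{m-1}w_k^{(\beta)}>0$, obtained from Property~\ref{wkprop}(i)--(iii). The packaging differs slightly: the paper bounds $|\lambda|$ for an eigenpair of $\tau({\bf W}_m^{(\beta)})^{-1}{\bf H}$ by a generalized-Gershgorin estimate, arriving at the inequality $|p_{kk}|-\sum_{j\neq k}|p_{kj}|>2\sum_j|h_{kj}|$, which upon unwinding the signs is exactly your row-sum statement; you instead split the spectral inclusion into the pair $\tau\pm 2{\bf H}\succ{\bf O}$ and apply ordinary Gershgorin to each. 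Your organization is a bit more modular (two standard diagonal-dominance checks rather than one eigenpair argument) and makes explicit that the same row-sum controls both bounds, but neither approach avoids anything the other needs.
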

	\begin{proof}
		Denote ${\bf H}_{m}^{(\beta)}:={\bf W}_{m}^{(\beta)}-\tau({\bf W}_{m}^{(\beta)})$. Rewrite ${\bf H}_{m}^{(\beta)}$ as ${\bf H}_{m}^{(\beta)}=[h_{ij}]_{i,j=1}^{m}$. Then, straightforward calculation yields that
		\begin{equation*}
			h_{ij}=\begin{cases}
				w_{i+j}^{(\beta)},\quad i+j<m-1,\\
				w_{2m+2-(i+j)}^{(\beta)},\quad i+j> m+1,\\
				0,\quad  {\rm otherwise}.
			\end{cases}
		\end{equation*}
		By Property \ref{wkprop}${\bf (i)}$, we know that 
		\begin{equation}\label{hijneg}
			h_{ij}\leq 0,\quad 1\leq i,j\leq m.
		\end{equation}
		Denote $p_{ij}=w_{|i-j|}^{(\beta)}-h_{ij}$.		Then, \eqref{hijneg} and Property \ref{wkprop}${\bf (i)}$,${\bf (iii)}$ imply that
		\begin{equation}\label{pijsign}
			p_{ij}=\begin{cases}
				w_{0}^{(\beta)}-h_{ii}>0,\quad i=j,\\
				w_{|i-j|}^{(\beta)}-w_{i+j}^{(\beta)}\leq 0,\quad i+j<m-1{\rm~and~} i\neq j,\\
				w_{|i-j|}^{(\beta)}-w_{2m+2-(i+j)}^{(\beta)}\leq 0,\quad i+j> m+1{\rm~and~} i\neq j,\\
				w_{|i-j|}^{(\beta)}\leq 0,\quad {\rm otherwise}.
			\end{cases}
		\end{equation}
		Let $(\lambda,z)$ be an eigen-pair of $\tau({\bf W}_{m}^{(\beta)})^{-1}{\bf H}_{m}^{(\beta)}$ such that $||z||_{\infty}=1$. Then, 
		\begin{equation}\label{tauwinvweigpair}
			{\bf H}_{m}^{(\beta)} z = \lambda \tau({\bf W}_{m}^{(\beta)})z. 
		\end{equation}
		Rewrite $z$ as $z=(z_1,z_2,...,z_m)^{\rm T}$. 
		\eqref{tauwinvweigpair} implies that for each $i=1,2,...,m$, it holds
		\begin{equation*}
			\sum_{j=1}^{m}h_{ij}z_j=\lambda \sum_{j=1}^{m}p_{ij}z_j.
		\end{equation*}
		Hence,
		$$\lambda p_{ii}z_i=\sum_{j=1}^{m}h_{ij}z_j-\lambda \sum_{j=1,i\neq j}^{m}p_{ij}z_j,\quad i=1,2,...,m.$$
		As $||z||_{\infty}=1$, there exists  $k_0\in\{1,2,...,m\}$ such that $|z_{k_0}|=1$. Then,
		\begin{equation}\label{lambdapkkbd}
			|\lambda| |p_{kk}|\leq \sum_{j=1}^{m}|h_{kj}|+|\lambda| \sum_{j=1,k\neq j}^{m}|p_{kj}|.
		\end{equation}
		By \eqref{pijsign} and Property \ref{wkprop}${\bf (ii)}$, we have
		\begin{align*}
			&|p_{kk}|-\sum_{j=1,j\neq k}^{m}|p_{kj}|-2\sum_{j=1}^{m}|h_{kj}| \\
			=&(w_0^{(\beta)}-h_{kk})- \sum_{j=1,j\neq k}^{m}(h_{kj}-w_{|k-j|}^{(\beta)})+2\sum_{j=1}^{m}h_{kj}\\
			=& w_0^{(\beta)}+\sum_{j=1,j\neq k}^{m}w_{|k-j|}^{(\beta)}+\sum_{j=1}^{m}h_{kj} \\
			=& w_0^{(\beta)}+\left(\sum_{j=1}^{k-1}w_{j}^{(\beta)}+\sum_{j=1}^{m-k}w_{j}^{(\beta)}\right)+\left(\sum_{j=k+1}^{m-1}w_{j}^{(\beta)}+\sum_{j=m-k+2}^{m-1}w_{j}^{(\beta)}\right)\\
			\geq&w_0^{(\beta)}+2\sum_{j=1}^{m-1}w_{j}^{(\beta)}  > 0,
		\end{align*} 
		which combined with \eqref{lambdapkkbd} implies that
		$$|\lambda|\leq \frac{ \sum\limits_{j=1}^{m}|h_{kj}|}{|p_{kk}|-\sum\limits_{j=1,k\neq j}^{m}|p_{kj}|}\leq  \frac{ \sum\limits_{j=1}^{m}|h_{kj}|}{2\sum\limits_{j=1}^{m}|h_{kj}|} =\frac{1}{2}. $$
		Therefore,
		\begin{equation}\label{tauinvhspectr}
			\sigma(\tau({\bf W}_{m}^{(\beta)})^{-1}{\bf H}_{m}^{(\beta)})\subset(-1/2,1/2).
		\end{equation}
		
		Since	$	\tau({\bf  W}_m)={\bf  W}_m-{\bf H}_m$, $$\tau({\bf W}_{m}^{(\beta)})^{-1}{\bf W}_{m}^{(\beta)}=\tau({\bf W}_{m}^{(\beta)})^{-1}(\tau({\bf W}_{m}^{(\beta)})+{\bf H}_m^{(\beta)})={\bf I}+\tau({\bf W}_{m}^{(\beta)})^{-1}{\bf H}_m^{(\beta)},$$
		which means $\sigma(\tau({\bf W}_{m}^{(\beta)})^{-1}{\bf W}_{m}^{(\beta)})=1+\sigma(\tau({\bf W}_{m}^{(\beta)})^{-1}{\bf H}_m^{(\beta)})\subset(1/2,3/2)$. The proof is completed. 
	\end{proof}
	
	\begin{remark}
		The proof of Lemma \ref{taumatprecspectralm} is similar to that of \cite[Lemma 4.3]{huang2021spectral}. But the proof in \cite[Lemma 4.3]{huang2021spectral} is solely for the shifted Gr\"{u}nwald spatial discretization. When handling a new spatial scheme, repeating the proof is too redundant. Because of this reason, we present Lemma \ref{taumatprecspectralm}, a more general result, which tell us that to prove the spectrum inclusion $\sigma(\tau({\bf W}_{m}^{(\beta)})^{-1}{\bf W}_{m}^{(\beta)})\subset(1/2,3/2)$, it suffices to verify Property \ref{wkprop} of a spatial scheme.  
	\end{remark}
	
	With Lemma \ref{taumatprecspectralm}, one can immediately prove the following Proposition.
	\begin{proposition}\label{taumatneqs}
		For any $\beta\in(1,2)$ and $m\in\mathbb{N}^{+}$, it holds
		\begin{description}
			\item[(i)]$ {\bf O}\prec\frac{\sqrt{3}}{3}{\bf B}_{\tau}\preceq {\bf B}\preceq\sqrt{3}{\bf B}_{\tau}$;
			\item [(ii)] $\frac{\sqrt{3}}{3}{\bf I}_{J}\preceq{\bf B}^{\frac{1}{2}}{\bf B}_{\tau}^{-1}{\bf B}^{\frac{1}{2}}\preceq \sqrt{3}{\bf I}_{J}$.
		\end{description}
	\end{proposition}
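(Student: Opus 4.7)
The plan is to translate the spectral inclusion from Lemma~\ref{taumatprecspectralm} into a Loewner-order sandwich for each block $\mathbf{W}_{m_i}^{(\beta_i)}$ against $\tau(\mathbf{W}_{m_i}^{(\beta_i)})$, then lift the sandwich through the Kronecker-sum structure to obtain~(i), and finally conjugate an inverted form of (i) by $\mathbf{B}^{1/2}$ to obtain~(ii).

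First I would fix an index $i$ and observe that both $\mathbf{W}_{m_i}^{(\beta_i)}$ and $\tau(\mathbf{W}_{m_i}^{(\beta_i)})$ are symmetric positive definite: the former by Lemma~\ref{wpo}, the latter because its eigenvalues $q_j$ from \eqref{sigmicomp} are strictly positive as already used in the proof of Proposition~\ref{btauspdprops}. Hence $\tau(\mathbf{W}_{m_i}^{(\beta_i)})^{-1/2}$ is well defined, and the matrix
\[
\mathbf{M}_i:=\tau(\mathbf{W}_{m_i}^{(\beta_i)})^{-1/2}\,\mathbf{W}_{m_i}^{(\beta_i)}\,\tau(\mathbf{W}_{m_i}^{(\beta_i)})^{-1/2}
\]
is symmetric and similar to $\tau(\mathbf{W}_{m_i}^{(\beta_i)})^{-1}\mathbf{W}_{m_i}^{(\beta_i)}$, so by Lemma~\ref{taumatprecspectralm} it satisfies $\tfrac12\mathbf{I}\prec\mathbf{M}_i\prec\tfrac32\mathbf{I}$. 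Conjugating this by $\tau(\mathbf{W}_{m_i}^{(\beta_i)})^{1/2}$ (which preserves the Loewner order since it is SPD) yields the key pointwise inequality
\[
\tfrac12\,\tau(\mathbf{W}_{m_i}^{(\beta_i)})\;\preceq\;\mathbf{W}_{m_i}^{(\beta_i)}\;\preceq\;\tfrac32\,\tau(\mathbf{W}_{m_i}^{(\beta_i)}).
\]

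Next I would tensor this inequality with identity matrices on both sides and multiply by the positive scalar $\eta_i$; since the Kronecker product by an identity preserves the Loewner order, summing over $i\in 1\wedge d$ gives
\[
\tfrac12\sum_{i=1}^{d}\eta_i\,\mathbf{I}_{m_i^-}\otimes\tau(\mathbf{W}_{m_i}^{(\beta_i)})\otimes\mathbf{I}_{m_i^+}\;\preceq\;\mathbf{B}\;\preceq\;\tfrac32\sum_{i=1}^{d}\eta_i\,\mathbf{I}_{m_i^-}\otimes\tau(\mathbf{W}_{m_i}^{(\beta_i)})\otimes\mathbf{I}_{m_i^+}.
\]
Recognising the sum on each side as $\tfrac{2}{\sqrt{3}}\mathbf{B}_\tau$ by the defining identity \eqref{btaudef}, the factors collapse to $\tfrac{1}{\sqrt3}=\tfrac{\sqrt3}{3}$ and $\sqrt3$ respectively, yielding part~(i). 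Strict positivity of $\tfrac{\sqrt3}{3}\mathbf{B}_\tau$ is already given by Proposition~\ref{btauspdprops}.

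For~(ii) I would invert~(i): since $\mathbf{B}$ and $\mathbf{B}_\tau$ are SPD, the order-reversing rule for inverses gives $\tfrac{\sqrt3}{3}\mathbf{B}^{-1}\preceq\mathbf{B}_\tau^{-1}\preceq\sqrt3\,\mathbf{B}^{-1}$. Then conjugating by the SPD matrix $\mathbf{B}^{1/2}$ (again preserving the order) produces $\tfrac{\sqrt3}{3}\mathbf{I}_J\preceq\mathbf{B}^{1/2}\mathbf{B}_\tau^{-1}\mathbf{B}^{1/2}\preceq\sqrt3\,\mathbf{I}_J$.

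The only non-routine step is the first one, converting the open-interval spectral inclusion of Lemma~\ref{taumatprecspectralm} into a Loewner sandwich; this requires the symmetry-restoring congruence by $\tau(\mathbf{W}_{m_i}^{(\beta_i)})^{-1/2}$, which is justified precisely because $\tau(\mathbf{W}_{m_i}^{(\beta_i)})\succ\mathbf{O}$. Everything afterwards is bookkeeping with the $\tfrac{\sqrt3}{2}$ scaling built into~\eqref{btaudef}.
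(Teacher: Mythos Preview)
Your proposal is correct and is essentially the argument the paper has in mind: the paper does not write out a proof at all but simply states that the proposition follows immediately from Lemma~\ref{taumatprecspectralm}, which is precisely the route you take (congruence by $\tau(\mathbf{W}_{m_i}^{(\beta_i)})^{-1/2}$ to turn the spectral inclusion into a Loewner sandwich, then lifting through the Kronecker-sum and absorbing the $\tfrac{\sqrt3}{2}$ factor from~\eqref{btaudef}). Your derivation of~(ii) via order-reversal of inverses and conjugation by $\mathbf{B}^{1/2}$ is exactly the standard manipulation one would expect and is consistent with the paper's later use of Lemma~\ref{spdneqs}.
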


	\begin{lemma}\label{spdneqs}\textnormal{(see, e.g., \cite{lin2021parallel})}
		Let ${\bf B}_1,{\bf B}_2\in\mathbb{R}^{k\times k}$ be real symmetric \textcolor{black}{matrices} such that ${\bf O}\prec{\bf B}_1\preceq{\bf B}_2$. Then, ${\bf O}\prec{\bf B}_2^{-1}\preceq{\bf B}_1^{-1}.$
	\end{lemma}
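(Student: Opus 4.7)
The plan is to establish the two claims separately. For the easier part, since ${\bf B}_1 \succ {\bf O}$ and ${\bf B}_2 - {\bf B}_1 \succeq {\bf O}$, adding these gives ${\bf B}_2 \succ {\bf O}$, so ${\bf B}_2$ is invertible and ${\bf B}_2^{-1} \succ {\bf O}$ by the spectral theorem. The substantive task is then to prove the ordering ${\bf B}_2^{-1} \preceq {\bf B}_1^{-1}$.

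My plan for this main step is to conjugate by a symmetric positive definite square root of ${\bf B}_1$. The notation ${\bf H}^z$ introduced before equation \eqref{tauopdef} already provides both ${\bf B}_1^{1/2}$ and ${\bf B}_1^{-1/2}$ as real symmetric positive definite matrices. Sandwiching the hypothesis ${\bf B}_2 - {\bf B}_1 \succeq {\bf O}$ between ${\bf B}_1^{-1/2}$ on both sides preserves positive semi-definiteness, since congruence by an invertible matrix does; this would yield
$$
{\bf B}_1^{-1/2}{\bf B}_2{\bf B}_1^{-1/2} \succeq {\bf I}_k.
$$

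Next I would invoke eigenvalue monotonicity: any real symmetric matrix ${\bf M}\succeq {\bf I}_k$ has all eigenvalues at least $1$, so ${\bf M}^{-1}$ has all eigenvalues at most $1$ and hence ${\bf M}^{-1}\preceq {\bf I}_k$. Applying this to ${\bf M}={\bf B}_1^{-1/2}{\bf B}_2{\bf B}_1^{-1/2}$ and using $({\bf B}_1^{-1/2}{\bf B}_2{\bf B}_1^{-1/2})^{-1}={\bf B}_1^{1/2}{\bf B}_2^{-1}{\bf B}_1^{1/2}$, I would obtain
$$
{\bf B}_1^{1/2}{\bf B}_2^{-1}{\bf B}_1^{1/2} \preceq {\bf I}_k,
$$
and a final conjugation by ${\bf B}_1^{-1/2}$ rearranges this to ${\bf B}_2^{-1}\preceq {\bf B}_1^{-1}$.

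There is essentially no obstacle here; the crux is the observation that congruence by ${\bf B}_1^{-1/2}$ reduces the given inequality to a comparison with the identity, where inversion becomes transparent through eigenvalue monotonicity. This is a classical fact about the Loewner order (as reflected by the citation to \cite{lin2021parallel} in the statement), and the proof amounts to just a few lines once the square root and congruence steps are in place.
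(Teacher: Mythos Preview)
Your proof is correct and is the standard congruence-plus-eigenvalue argument for operator monotonicity of the inverse on the positive definite cone. Note, however, that the paper does not actually supply its own proof of this lemma: it is stated with the citation ``(see, e.g., \cite{lin2021parallel})'' and no argument, so there is nothing in the paper to compare your approach against beyond observing that your derivation is a valid self-contained justification of the cited fact.
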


	\begin{lemma}\textnormal{\textcolor{black}{(see \cite{lin2018separable})}}\label{timematspd}
		For any $\alpha\in(0,1)$, it holds that $l_0^{(\alpha)}>0$ and ${\bf T}+{\bf T}^{\rm T}\succ{\bf O}$.
	\end{lemma}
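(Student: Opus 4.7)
The plan is to handle the two claims separately. The first, $l_0^{(\alpha)}=[\Gamma(2-\alpha)\mu^{\alpha}]^{-1}>0$, is immediate from $\alpha\in(0,1)$ together with $\mu>0$ and $\Gamma(2-\alpha)>0$, so no real work is needed there. For ${\bf T}+{\bf T}^{\rm T}\succ{\bf O}$, I would imitate the Gershgorin-based argument already used in Lemma \ref{wpo}. The matrix ${\bf T}+{\bf T}^{\rm T}$ is a real symmetric Toeplitz matrix with first row $(2l_0^{(\alpha)},l_1^{(\alpha)},\dots,l_{N-1}^{(\alpha)})$, so establishing strict row diagonal dominance with a positive diagonal entry would deliver positive definiteness immediately, since the eigenvalues are real.

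The first preparatory fact is the sign of $l_k^{(\alpha)}$ for $k\geq 1$. Reading the bracketed factor $(k+1)^{1-\alpha}-2k^{1-\alpha}+(k-1)^{1-\alpha}$ as the second forward difference of $g(x)=x^{1-\alpha}$, the identity $g''(x)=-(1-\alpha)\alpha x^{-\alpha-1}<0$ on $(0,\infty)$ shows that $g$ is strictly concave, so this second difference is negative and $l_k^{(\alpha)}<0$ for every $k\geq 1$. In particular $|l_k^{(\alpha)}|=-l_k^{(\alpha)}$.

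The second preparatory fact is a telescoping identity for the partial sums: direct cancellation yields
$$
\sum_{k=0}^{m-1}l_k^{(\alpha)}=[\Gamma(2-\alpha)\mu^{\alpha}]^{-1}\bigl[m^{1-\alpha}-(m-1)^{1-\alpha}\bigr]>0
$$
for every $m\geq 1$. Combining the two facts, the sum of absolute values of the off-diagonal entries in row $i$ of ${\bf T}+{\bf T}^{\rm T}$ equals $-\sum_{j=1}^{i-1}l_j^{(\alpha)}-\sum_{j=1}^{N-i}l_j^{(\alpha)}$, so strict row dominance reduces to
$$
\Bigl(l_0^{(\alpha)}+\sum_{j=1}^{i-1}l_j^{(\alpha)}\Bigr)+\Bigl(l_0^{(\alpha)}+\sum_{j=1}^{N-i}l_j^{(\alpha)}\Bigr)>0,
$$
and each parenthesised expression is a positive partial sum by the telescoping identity above. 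Applying the Gershgorin circle theorem to the symmetric matrix ${\bf T}+{\bf T}^{\rm T}$ then confines every eigenvalue strictly to the positive half-line, finishing the proof.

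The only delicate steps are the sign of the second difference, which hinges on $1-\alpha\in(0,1)$ so that $g$ is concave rather than convex, and the bookkeeping of off-diagonal index ranges in rows near the first and last (where one of the two partial sums collapses but remains positive). Both are routine once the telescoping identity is in hand, so I do not expect a serious obstacle, which is consistent with the result being quoted from \cite{lin2018separable}.
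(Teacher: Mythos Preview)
Your argument is correct. The sign computation for $l_k^{(\alpha)}$ via concavity of $x^{1-\alpha}$, the telescoping identity $\sum_{k=0}^{m-1}l_k^{(\alpha)}=[\Gamma(2-\alpha)\mu^{\alpha}]^{-1}\bigl[m^{1-\alpha}-(m-1)^{1-\alpha}\bigr]>0$, and the resulting strict diagonal dominance of the symmetric Toeplitz matrix ${\bf T}+{\bf T}^{\rm T}$ all check out, including the boundary rows $i=1$ and $i=N$ where one of the two partial sums degenerates to the single term $l_0^{(\alpha)}$.

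As for comparison: the paper does not supply its own proof of this lemma at all; it simply quotes the result from \cite{lin2018separable}. Your Gershgorin argument is therefore a self-contained replacement rather than an alternative to anything in the present text. It is entirely in the spirit of the paper's Lemma~\ref{wpo}, which uses the same diagonal-dominance device for the spatial matrix ${\bf W}_m^{(\beta)}$, so stylistically it fits well here.
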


	The following proposition holds obviously.
	\begin{proposition}\label{wghtsumbdlem}
		For positive numbers $\xi_i$ and $\zeta_i$ $(1\leq i\leq m)$,
		it obviously holds that
		\begin{equation*}
			\min\limits_{1\leq i\leq m}\frac{\xi_i}{\zeta_i}\leq\bigg(\sum\limits_{i=1}^{m}\zeta_i\bigg)^{-1}\bigg(\sum\limits_{i=1}^{m}\xi_i\bigg)\leq\max\limits_{1\leq i\leq m}\frac{\xi_i}{\zeta_i}.
		\end{equation*}
	\end{proposition}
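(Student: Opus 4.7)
The plan is a standard pointwise-to-sum argument for a weighted mediant inequality. First I would abbreviate $\rho_{\min}:=\min_{1\leq i\leq m}\xi_i/\zeta_i$ and $\rho_{\max}:=\max_{1\leq i\leq m}\xi_i/\zeta_i$, which are well-defined positive numbers since every $\zeta_i>0$ and every $\xi_i>0$. By definition of the minimum and the maximum, each ratio satisfies $\rho_{\min}\leq \xi_i/\zeta_i\leq \rho_{\max}$ for all $i\in\{1,\dots,m\}$.

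Next, since $\zeta_i>0$, multiplying through by $\zeta_i$ preserves the direction of the inequality, giving the pointwise bound $\rho_{\min}\zeta_i\leq \xi_i\leq \rho_{\max}\zeta_i$ for each $i$. Summing these $m$ inequalities yields $\rho_{\min}\sum_{i=1}^{m}\zeta_i\leq \sum_{i=1}^{m}\xi_i\leq \rho_{\max}\sum_{i=1}^{m}\zeta_i$. Positivity of each $\zeta_i$ ensures $\sum_{i=1}^{m}\zeta_i>0$, so dividing through by this quantity yields exactly the claimed two-sided bound.

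There is no real obstacle here; the proposition is the elementary ``mediant lies between extreme ratios'' fact (and the paper itself flags it as obvious). The only subtlety worth noting in the write-up is the use of strict positivity of the $\zeta_i$ at two places: to ensure the inequalities survive multiplication without flipping, and to guarantee that the denominator in the final division is nonzero.
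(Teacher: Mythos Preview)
Your proposal is correct and complete; the paper itself offers no proof of this proposition beyond flagging it as obvious, so your standard mediant argument is exactly the intended justification.
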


	Recall that there is a factor $\frac{\sqrt{3}}{2}$ appearing in the definition of \eqref{btaudef}. Such a factor is for minimizing the condition number upper bound estimation in Theorem \ref{mainthm}. To see this, we denote
	\begin{align*}
		&{\bf B}_{\tau,\eta}:=\eta\sum\limits_{i=1}^{d}\eta_i{\bf I}_{m_i^{-}}\otimes \tau({\bf W}_{m_i}^{(\beta_i)})\otimes {\bf I}_{m_i^{+}},\quad {\bf P}_{\eta}:={\bf I}_{J}\otimes {\bf T}+{\bf B}_{\tau,\eta}\otimes{\bf I}_N,\\
		&{\bf P}_{l,\eta}:={\bf B}_{\tau,\eta}^{-\frac{1}{2}}\otimes {\bf T}+{\bf B}_{\tau,\eta}^{\frac{1}{2}}\otimes {\bf I}_{N},\quad {\bf P}_{r,\eta}={\bf B}_{\tau,\eta}^{\frac{1}{2}}\otimes {\bf I}_{N},
	\end{align*} 
	for some constant $\eta>0$.
	It is clear that 
	\begin{align}
		&{\bf B}_{\tau,\eta}=\frac{2\eta\sqrt{3}}{3}{\bf B}_{\tau},\label{tauetabtaurelate}\\
		&{\bf P}_{\eta}\stackrel{\eta=\sqrt{3}/2}{=\joinrel=\joinrel=\joinrel=\joinrel=\joinrel}{\bf P},\qquad {\bf P}_{l,\eta}\stackrel{\eta=\sqrt{3}/2}{=\joinrel=\joinrel=\joinrel=\joinrel=\joinrel}{\bf P}_{l},\qquad{\bf P}_{r,\eta}\stackrel{\eta=\sqrt{3}/2}{=\joinrel=\joinrel=\joinrel=\joinrel=\joinrel}{\bf P}_{r}\notag.
	\end{align}
	
	When $\eta=\frac{\sqrt{3}}{2}$, it holds ${\bf B}_{\tau,\eta}={\bf B}_{\tau}$, ${\bf P}_{\eta}={\bf P}$, ${\bf P}_{l,\eta}={\bf P}_{l}$ and ${\bf P}_{r,\eta}={\bf P}_{r}$. In what follows, we estimate an upper bound of $\kappa_2({\bf P}_{l,\eta}^{-1}{\bf A}{\bf P}_{r,\eta}^{-1})$ in terms of $\eta$, and the upper bound achieves minimal when $\eta=\frac{\sqrt{3}}{2}$.
	{
		\begin{theorem}\label{mainthm}
			Condition number of the preconditioned matrix ${\bf P}_{l,\eta}^{-1}{\bf A}{\bf P}_{r,\eta}^{-1}$ is uniformly bounded, i.e.,
			\begin{equation*}
				\sup\limits_{N,J}\kappa_2({\bf P}_{l,\eta}^{-1}{\bf A}{\bf P}_{r,\eta}^{-1})\leq \nu(\eta), 
			\end{equation*}
			with
			\begin{equation*}
				\nu(\eta):=\sqrt{\frac{3\max\left\{\frac{1}{2\eta},2\eta,1\right\}}{	\min\left\{\frac{1}{2\eta},\frac{2\eta}{3},1\right\}}}.
			\end{equation*}
			In particular, $\nu(\sqrt{3}/2)=\min\limits_{\eta\in(0,+\infty)}\nu(\eta)=3$, i.e.,
			\begin{equation*}
				\sup\limits_{N,J}\kappa_2({\bf P}_{l,\eta}^{-1}{\bf A}{\bf P}_{r,\eta}^{-1})\stackrel{\eta=\sqrt{3}/2}{=\joinrel=\joinrel=\joinrel=\joinrel=\joinrel}\sup\limits_{N,J}\kappa_2({\bf P}_{l}^{-1}{\bf A}{\bf P}_{r}^{-1})\leq \nu(\sqrt{3}/2)=3.
			\end{equation*}
		\end{theorem}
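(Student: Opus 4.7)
The plan is to bound $\kappa_2(\mathbf{M})$ with $\mathbf{M}:=\mathbf{P}_{l,\eta}^{-1}\mathbf{A}\mathbf{P}_{r,\eta}^{-1}$ by reformulating it as a generalized eigenvalue problem and then controlling the extremal values through Proposition \ref{wghtsumbdlem}. First, Proposition \ref{taumatneqs}${\bf (i)}$ together with \eqref{tauetabtaurelate} yields the sandwich $\frac{1}{2\eta}\mathbf{B}_{\tau,\eta}\preceq\mathbf{B}\preceq\frac{3}{2\eta}\mathbf{B}_{\tau,\eta}$; equivalently, $\mathbf{C}:=\mathbf{B}_{\tau,\eta}^{-1/2}\mathbf{B}\mathbf{B}_{\tau,\eta}^{-1/2}$ is SPD with spectrum inside $[\frac{1}{2\eta},\frac{3}{2\eta}]$, so $\kappa_2(\mathbf{C})=3$ independently of $\eta$. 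Since $\mathbf{P}_{r,\eta}$ is symmetric with $\mathbf{P}_{r,\eta}^2=\mathbf{B}_{\tau,\eta}\otimes\mathbf{I}_N$, the squared singular values of $\mathbf{M}$ coincide with the generalized eigenvalues of $\mathbf{A}(\mathbf{B}_{\tau,\eta}^{-1}\otimes\mathbf{I}_N)\mathbf{A}^T\mathbf{x}=\lambda\,\mathbf{P}_{l,\eta}\mathbf{P}_{l,\eta}^T\mathbf{x}$, so $\kappa_2(\mathbf{M})^2$ reduces to their ratio.

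Conjugating this pencil by $\mathbf{B}_{\tau,\eta}^{1/2}\otimes\mathbf{I}_N$ on both sides converts it to the cleaner pencil $\mathbf{K}\mathbf{K}^T\mathbf{y}=\lambda\tilde{\mathbf{K}}\tilde{\mathbf{K}}^T\mathbf{y}$ with $\mathbf{K}:=\mathbf{C}\otimes\mathbf{I}_N+\mathbf{B}_{\tau,\eta}^{-1}\otimes\mathbf{T}$ and $\tilde{\mathbf{K}}:=\mathbf{I}_{JN}+\mathbf{B}_{\tau,\eta}^{-1}\otimes\mathbf{T}$. Expanding the two products and collecting by Kronecker structure gives three-term decompositions
\begin{equation*}
\mathbf{y}^T\tilde{\mathbf{K}}\tilde{\mathbf{K}}^T\mathbf{y}=\underbrace{\|\mathbf{y}\|_2^2}_{\zeta_1}+\underbrace{\mathbf{y}^T(\mathbf{B}_{\tau,\eta}^{-1}\otimes(\mathbf{T}+\mathbf{T}^T))\mathbf{y}}_{\zeta_2}+\underbrace{\|(\mathbf{B}_{\tau,\eta}^{-1}\otimes\mathbf{T}^T)\mathbf{y}\|_2^2}_{\zeta_3},
\end{equation*}
each strictly positive for $\mathbf{y}\neq\mathbf{0}$ by Lemma \ref{timematspd}, and similarly
\begin{equation*}
\mathbf{y}^T\mathbf{K}\mathbf{K}^T\mathbf{y}=\underbrace{\mathbf{y}^T(\mathbf{C}^2\otimes\mathbf{I}_N)\mathbf{y}}_{\xi_1}+\underbrace{\mathbf{y}^T[(\mathbf{C}\mathbf{B}_{\tau,\eta}^{-1})\otimes\mathbf{T}^T+(\mathbf{B}_{\tau,\eta}^{-1}\mathbf{C})\otimes\mathbf{T}]\mathbf{y}}_{\xi_2}+\zeta_3.
\end{equation*}

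I would then establish pairwise two-sided bounds $\alpha_i\zeta_i\leq\xi_i\leq\beta_i\zeta_i$ ($i=1,2,3$) using only the spectral bound on $\mathbf{C}$ and the positive definiteness of $\mathbf{T}+\mathbf{T}^T$. Since all $\zeta_i>0$, Proposition \ref{wghtsumbdlem} yields $\min_i\alpha_i\leq(\sum_i\xi_i)/(\sum_i\zeta_i)\leq\max_i\beta_i$, whence $\kappa_2(\mathbf{M})^2\leq\max_i\beta_i/\min_i\alpha_i$; the three $(\alpha_i,\beta_i)$ are arranged so that the outer ratio matches $\nu(\eta)^2$ after the overall factor $\kappa_2(\mathbf{C})=3$ is pulled out, producing the three quantities in $\max\{1/(2\eta),2\eta,1\}$ and $\min\{1/(2\eta),2\eta/3,1\}$. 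The main obstacle is the cross term $\xi_2$: because $\mathbf{C}$ and $\mathbf{B}_{\tau,\eta}^{-1}$ generally do not commute, $\xi_2$ is not a priori a nonnegative quadratic form. The key device is the self-transpose identity $\mathbf{y}^T\mathbf{X}\mathbf{y}=\mathbf{y}^T\mathbf{X}^T\mathbf{y}$ for scalar quadratic forms, which collapses $\xi_2$ into $2\mathbf{y}^T((\mathbf{C}\mathbf{B}_{\tau,\eta}^{-1})\otimes\mathbf{T}^T)\mathbf{y}$; writing $\mathbf{y}=\mathrm{vec}(Y)$ with $Y\in\mathbb{R}^{N\times J}$ and using Kronecker--trace identities rewrites it as $\xi_2=2\,\mathrm{tr}(\mathbf{B}_{\tau,\eta}^{-1}\mathbf{C}\cdot Y^T\mathbf{T}^T Y)$, after which a symmetric/skew decomposition of $Y^T\mathbf{T}^T Y$ combined with the spectral bounds on $\mathbf{C}$ and $\mathbf{T}+\mathbf{T}^T\succ\mathbf{0}$ delivers the needed $(\alpha_2,\beta_2)$. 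Finally, elementary piecewise analysis shows $\nu(\eta)^2$ equals $9/(4\eta^2)$ on $(0,1/2]$, equals $9$ on $[1/2,\sqrt{3}/2]$, and equals $12\eta^2$ on $[\sqrt{3}/2,\infty)$, so $\min_{\eta>0}\nu(\eta)=3$ is attained at $\eta=\sqrt{3}/2$; specializing there gives $\sup_{N,J}\kappa_2(\mathbf{P}_l^{-1}\mathbf{A}\mathbf{P}_r^{-1})\leq 3$.
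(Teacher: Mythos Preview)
Your high-level plan (pass to the generalized Rayleigh quotient and then invoke Proposition \ref{wghtsumbdlem} on a three-term splitting) matches the paper's, but the specific decomposition you chose leaves a cross term you cannot control, so the argument does not close.

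The difficulty is exactly the one you flagged: in your splitting, $\xi_2=\mathbf{y}^{\rm T}[(\mathbf{C}\mathbf{B}_{\tau,\eta}^{-1})\otimes\mathbf{T}^{\rm T}+(\mathbf{B}_{\tau,\eta}^{-1}\mathbf{C})\otimes\mathbf{T}]\mathbf{y}$ with $\mathbf{C}$ and $\mathbf{B}_{\tau,\eta}^{-1}$ non-commuting. Your proposed fix---rewriting as a trace and decomposing $Y^{\rm T}\mathbf{T}^{\rm T}Y$ into its symmetric and skew parts---does not deliver $\alpha_2\zeta_2\le\xi_2\le\beta_2\zeta_2$. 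The skew contribution pairs $Y^{\rm T}\mathbf{T}_aY$ (skew) against the skew part of $\mathbf{B}_{\tau,\eta}^{-1}\mathbf{C}$, which is nonzero and admits no sign control from the spectral bound on $\mathbf{C}$ alone; the symmetric contribution involves the Jordan product $\tfrac12(\mathbf{B}_{\tau,\eta}^{-1}\mathbf{C}+\mathbf{C}\mathbf{B}_{\tau,\eta}^{-1})$, and the inequality $\tfrac{1}{2\eta}\mathbf{I}\preceq\mathbf{C}\preceq\tfrac{3}{2\eta}\mathbf{I}$ does \emph{not} transfer to a Loewner sandwich of this Jordan product against $\mathbf{B}_{\tau,\eta}^{-1}$. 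In fact one can build $2\times2$ examples with $\mathbf{B}_{\tau,\eta}$ diagonal and $\mathbf{C}$ having bounded spectrum but a nonzero off-diagonal entry for which $\xi_2$ is negative while $\zeta_2>0$, so no lower bound $\alpha_2\zeta_2\le\xi_2$ with $\alpha_2>0$ can follow from the information you use. A secondary symptom is that even your $\xi_1/\zeta_1$ bounds are $[1/(4\eta^2),\,9/(4\eta^2)]$, which would not reproduce the specific function $\nu(\eta)$ in the statement.

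The paper avoids the non-commutativity altogether by a different factorization: write $\mathbf{A}=\hat{\mathbf{A}}(\mathbf{B}^{1/2}\otimes\mathbf{I}_N)$ with $\hat{\mathbf{A}}:=\mathbf{B}^{1/2}\otimes\mathbf{I}_N+\mathbf{B}^{-1/2}\otimes\mathbf{T}$. Then $\mathbf{M}\mathbf{M}^{\rm T}=\mathbf{P}_{l,\eta}^{-1}\hat{\mathbf{A}}\bigl[(\mathbf{B}^{1/2}\mathbf{B}_{\tau,\eta}^{-1}\mathbf{B}^{1/2})\otimes\mathbf{I}_N\bigr]\hat{\mathbf{A}}^{\rm T}\mathbf{P}_{l,\eta}^{-\rm T}$, and Proposition \ref{taumatneqs}{\bf (ii)} handles the bracketed factor as a clean multiplicative sandwich, producing the overall $3$ in $\nu(\eta)$. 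What remains is the Rayleigh quotient of $\mathbf{P}_{l,\eta}^{-1}\hat{\mathbf{A}}\hat{\mathbf{A}}^{\rm T}\mathbf{P}_{l,\eta}^{-\rm T}$, and the point of using $\hat{\mathbf{A}}$ is that its cross term is $\mathbf{B}^{1/2}\mathbf{B}^{-1/2}\otimes\mathbf{T}^{\rm T}+\mathbf{B}^{-1/2}\mathbf{B}^{1/2}\otimes\mathbf{T}=\mathbf{I}_J\otimes(\mathbf{T}+\mathbf{T}^{\rm T})$, which \emph{exactly matches} the cross term in $\mathbf{P}_{l,\eta}\mathbf{P}_{l,\eta}^{\rm T}$. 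The three termwise ratios are then $[\tfrac{1}{2\eta},\tfrac{3}{2\eta}]$, $\{1\}$, and $[\tfrac{2\eta}{3},2\eta]$ (using Lemma \ref{spdneqs} on the third), which is precisely the content of $\nu(\eta)$. In short, the missing idea is to pull $\mathbf{B}^{1/2}$ out of $\mathbf{A}$ on the right so that the troublesome cross term collapses to $\mathbf{I}_J\otimes(\mathbf{T}+\mathbf{T}^{\rm T})$ before applying Proposition \ref{wghtsumbdlem}.
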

		\begin{proof} 	
			Denote $\hat{\bf A}={\bf B}^{\frac{1}{2}}\otimes{\bf I}_N+{\bf B}^{-\frac{1}{2}}\otimes{\bf T}$. Then, it is clear that 
			\begin{align}
				&{\bf A}=\hat{\bf A}({\bf B}^{\frac{1}{2}}\otimes{\bf I}_N),\notag\\
				&({\bf P}_{l,\eta}^{-1}{\bf A}{\bf P}_{r,\eta}^{-1})({\bf P}_{l,\eta}^{-1}{\bf A}{\bf P}_{r,\eta}^{-1})^{\rm T}={\bf P}_{l,\eta}^{-1}\hat{\bf A}[({\bf B}^{\frac{1}{2}}({\bf B}_{\tau,\eta})^{-1}{\bf B}^{\frac{1}{2}})\otimes{\bf I}_N]\hat{\bf A}^{\rm T}{\bf P}_{l,\eta}^{-\rm T}.\notag
			\end{align}
			By \eqref{tauetabtaurelate}, we have ${\bf B}^{\frac{1}{2}}({\bf B}_{\tau,\eta})^{-1}{\bf B}^{\frac{1}{2}}=\frac{\sqrt{3}}{2\eta}{\bf B}^{\frac{1}{2}}({\bf B}_{\tau})^{-1}{\bf B}^{\frac{1}{2}}$,
			which together with Proposition \ref{taumatneqs}${\bf (ii)}$ implies that
			\begin{equation*}
				\frac{1}{2\eta}{\bf I}_{J}	\preceq{\bf B}^{\frac{1}{2}}({\bf B}_{\tau,\eta})^{-1}{\bf B}^{\frac{1}{2}}\preceq \frac{3}{2\eta}{\bf I}_{J}.
			\end{equation*}
			That means
			\begin{align}\label{part1esti}
				\frac{1}{2\eta}{\bf P}_{l,\eta}^{-1}\hat{\bf A}\hat{\bf A}^{\rm T}{\bf P}_{l,\eta}^{-\rm T}\preceq({\bf P}_{l,\eta}^{-1}{\bf A}{\bf P}_{r,\eta}^{-1})({\bf P}_{l,\eta}^{-1}{\bf A}{\bf P}_{r,\eta}^{-1})^{\rm T}\preceq\frac{3}{2\eta}{\bf P}_{l,\eta}^{-1}\hat{\bf A}\hat{\bf A}^{\rm T}{\bf P}_{l,\eta}^{-\rm T}.
			\end{align}
			It thus remains to estimate Rayleigh quotient of ${\bf P}_{l,\eta}^{-1}\hat{\bf A}\hat{\bf A}^{\rm T}{\bf P}_{l,\eta}^{-\rm T}$. Let ${\bf z}\in\mathbb{R}^{ J N\times 1}$ \textcolor{black}{denote} any non-zero vector. Then,
			\begin{align}
				\frac{{\bf z}^{\rm T}{\bf P}_{l,\eta}^{-1}\hat{\bf A}\hat{\bf A}^{\rm T}{\bf P}_{l,\eta}^{-\rm T}{\bf z}}{{\bf z}^{\rm T}{\bf z}}&\stackrel{{\bf y}={\bf P}_{l,\eta}^{-\rm T}{\bf z}}{=\joinrel=\joinrel=\joinrel=\joinrel=\joinrel=}\frac{{\bf y}^{\rm T}\hat{\bf A}\hat{\bf A}^{\rm T}{\bf y}}{{\bf y}^{\rm T}{\bf P}_{l,\eta}{\bf P}_{l,\eta}^{\rm T}{\bf y}}\notag\\
				&=\frac{{\bf y}^{\rm T}[{\bf B}\otimes{\bf I}_N+{\bf I}_{ J }\otimes({\bf T}+{\bf T}^{\rm T})+{\bf B}^{-1}\otimes({\bf T}{\bf T}^{\rm T})]{\bf y}}{{\bf y}^{\rm T}[({\bf B}_{\tau,\eta})\otimes{\bf I}_N+{\bf I}_{ J }\otimes({\bf T}+{\bf T}^{\rm T})+({\bf B}_{\tau,\eta})^{-1}\otimes({\bf T}{\bf T}^{\rm T})]{\bf y}},\label{part2preesti}
			\end{align}
			By Lemma \ref{timematspd}, we know that ${\bf T}+{\bf T}^{\rm T}\succ{\bf O}$. Since ${\bf T}$ is a lower triangular matrix with its diagonal entries all equal to $l_0^{(\alpha)}>0$, ${\bf T}$ is invertible and thus ${\bf T}{\bf T}^{\rm T}\succ{\bf O}$. That means the matrices appearing in the numerator and \textcolor{black}{the denominator} of right hand side of \eqref{part2preesti} are all positive definite. Thus, Proposition \ref{wghtsumbdlem} is applicable to estimating \eqref{part2preesti}.
			
			\eqref{tauetabtaurelate} and Proposition \ref{taumatneqs}${\bf (i)}$ imply that
			\begin{equation}\label{btauetabrelate}
				{\bf O}\prec \frac{1}{2\eta}{\bf B}_{\tau,\eta}\preceq{\bf B}\preceq \frac{3}{2\eta}{\bf B}_{\tau,\eta}.
			\end{equation}
			Hence,
			\begin{equation}\label{firstterm}
				\frac{1}{2\eta}\leq\frac{{\bf y}^{\rm T}({\bf B}\otimes{\bf I}_N){\bf y}}{{\bf y}^{\rm T}[({\bf B}_{\tau,\eta})\otimes{\bf I}_N]{\bf y}}\leq \frac{3}{2\eta}.
			\end{equation}
			
			Lemma \ref{spdneqs} and \eqref{btauetabrelate} imply that
			\begin{equation*}
				{\bf O}\prec \frac{2\eta}{3}{\bf B}_{\tau,\eta}^{-1}\preceq {\bf B}^{-1}\preceq 2\eta {\bf B}_{\tau,\eta}^{-1}.
			\end{equation*}
			
			By \eqref{tauetabtaurelate}, Proposition \ref{taumatneqs}${\bf (i)}$ and Lemma \ref{spdneqs},
			\begin{align}
				\frac{2\eta}{3}=\frac{2\eta}{3}\times \frac{{\bf y}^{\rm T}[({\bf B}_{\tau,\eta})^{-1}\otimes({\bf T}{\bf T}^{\rm T})]{\bf y}}{{\bf y}^{\rm T}[({\bf B}_{\tau,\eta})^{-1}\otimes({\bf T}{\bf T}^{\rm T})]{\bf y}}&\leq\frac{{\bf y}^{\rm T}[{\bf B}^{-1}\otimes({\bf T}{\bf T}^{\rm T})]{\bf y}}{{\bf y}^{\rm T}[({\bf B}_{\tau,\eta})^{-1}\otimes({\bf T}{\bf T}^{\rm T})]{\bf y}}\notag\\
				&\leq 2\eta\times\frac{{\bf y}^{\rm T}[({\bf B}_{\tau,\eta})^{-1}\otimes({\bf T}{\bf T}^{\rm T})]{\bf y}}{{\bf y}^{\rm T}[({\bf B}_{\tau,\eta})^{-1}\otimes({\bf T}{\bf T}^{\rm T})]{\bf y}}=2\eta.\label{thirdterm}
			\end{align}
			Applying Proposition \ref{wghtsumbdlem} to \eqref{part2preesti}, \eqref{firstterm} and \eqref{thirdterm}, we obtain that
			\begin{equation*}
				\min\left\{\frac{1}{2\eta},\frac{2\eta}{3},1\right\}\leq\frac{{\bf z}^{\rm T}{\bf P}_{l,\eta}^{-1}\hat{\bf A}\hat{\bf A}^{\rm T}{\bf P}_{l,\eta}^{-\rm T}{\bf z}}{{\bf z}^{\rm T}{\bf z}}\leq\max\left\{\frac{1}{2\eta},2\eta,1\right\},
			\end{equation*}
			which together with \eqref{part1esti} implies that
			\begin{equation}\label{singvalsesti}
				\frac{1}{2\eta}\min\left\{\frac{1}{2\eta},\frac{2\eta}{3},1\right\}{\bf I}_{N J }\preceq({\bf P}_{l,\eta}^{-1}{\bf A}{\bf P}_{r,\eta}^{-1})({\bf P}_{l,\eta}^{-1}{\bf A}{\bf P}_{r,\eta}^{-1})^{\rm T}\preceq	\frac{3}{2\eta}\max\left\{\frac{1}{2\eta},2\eta,1\right\}{\bf I}_{N J }.
			\end{equation}
			\eqref{singvalsesti} implies that
			\begin{align*}
				\kappa_2({\bf P}_{l,\eta}^{-1}{\bf A}{\bf P}_{r,\eta}^{-1})&\leq\nu(\eta).
			\end{align*}
			
			$\nu(\cdot)$ is a single-variable function. It is easy to check that
			\begin{equation*}
				\nu(\sqrt{3}/2)=\min\limits_{\eta\in(0,+\infty)}\nu(\eta).
			\end{equation*}
			Recall that $${\bf P}_{l,\eta}^{-1}{\bf A}{\bf P}_{r,\eta}^{-1}\stackrel{\eta=\sqrt{3}/2}{=\joinrel=\joinrel=\joinrel=\joinrel=\joinrel}\sup\limits_{N,J}{\bf P}_{l}^{-1}{\bf A}{\bf P}_{r}^{-1}.$$
			Hence, $\sup\limits_{N,J}\kappa_2({\bf P}_{l}^{-1}{\bf A}{\bf P}_{r}^{-1})\leq \nu(\sqrt{3}/2)=3$.
			The proof is completed.
		\end{proof}
		
		\begin{remark}
			Theorem shows that the upper bound $\nu(\eta)$ of condition number of $\kappa_2({\bf P}_{l,\eta}^{-1}{\bf A}{\bf P}_{r,\eta}^{-1})$ achieves minimum at $\eta=\frac{\sqrt{3}}{2}$, which explains why we take a factor $\frac{\sqrt{3}}{2}$ in the definition of ${\bf B}_{\eta}$ given in \eqref{btaudef}. Although Theorem \ref{mainthm} shows that ${\bf P}_{l}^{-1}{\bf A}{\bf P}_{r}^{-1}$, numerical evidence in \cite{lin2021parallel} shows that ${\bf P}^{-1}{\bf A}$ is ill-conditioned. Then, Theorem \ref{residual} indicates that even for an ill-conditioned system, one may have a good convergence rate estimation for Krylov subspace solver  by relating the ill-conditioned problem to a well-conditioned problem via their associated Krylov subspaces. Hence, the significance of Theorem \ref{residual} is that it provides more flexibility for designing a preconditioner and estimating the convergence rate of the Krylov subspace solver.
		\end{remark}
	}
	\section{The Implementation}\label{implementsection}
	In this section, we propose a fast implementation of the Krylov subspace solver for solving the preconditioned system \eqref{aaosystem}. It suffices to implement the underlying matrix-vector product efficiently to implement a Krylov subspace solver. In other words, we will discuss in this section how to efficiently compute a matrix-vector product ${\bf P}^{-1}{\bf A}{\bf v}$ for an arbitrarily given vector ${\bf v}$. 
	
	In what follows, we discuss the fast computation of a matrix-vector product $\hat{\bf v}={\bf P}^{-1}{\bf A}{\bf v}$ for a given vector ${\bf v}\in\mathbb{R}^{NJ\times 1}$. The computation of $\hat{\bf v}$ can be divided into two steps.
	\begin{align}
		&{\rm Step~1}: \quad  {\rm Compute~} \dot{\bf v}={\bf A}{\bf v},\label{step1pv}\\
		&{\rm Step~2}: \quad  {\rm Compute~}\hat{\bf v}={\bf P}^{-1}\dot{\bf v}.\label{step2pv}
	\end{align}
	
	In step (\ref{step1pv}), notice that $\dot{\bf v}={\bf A}{\bf v}=({\bf B}\otimes{\bf I}_N){\bf v}+({\bf I}_J\otimes{\bf T}){\bf v}$. It is clear that ${\bf B}$ and ${\bf T}$ are both (multilevel) Toeplitz matrices. Hence, the computation of \eqref{step1pv} requires $\mathcal{O}(JN\log (JN))$ flops. 
	
	To see the fast implementation of step (\ref{step2pv}), we will utilize a block diagonalization form of ${\bf P}^{-1} $. By Equation \eqref{btaudiag} and Lemma \ref{btauspdprops}, we know that
	\begin{equation}\label{btaudiagreform}
		{\bf B}_{\tau}={\bf S}\tilde{\bf \Lambda}{\bf S},\quad \tilde{\bf \Lambda}=\frac{\sqrt{3}}{2}{\bf \Lambda}\succ {\bf O}.
	\end{equation}
	From the definition of ${\bf S}$, we know that ${\bf S}$ is real symmetric and orthogonal, i.e., ${\bf S}^{-1}={\bf S}^{\rm T}={\bf S}$.

	By Equation \eqref{btaudiagreform} and \eqref{pdef}, we know that
	\begin{equation}\label{plinvprinvexplicit}
		{\bf P}^{-1}=({\bf S}\otimes{\bf I}_N)({\bf I}_J\otimes{\bf T}+\tilde{\bf \Lambda}\otimes {\bf I}_N)^{-1}({\bf S}\otimes{\bf I}_N).
	\end{equation} 
	%
	Thus the  matrix-vector product $\hat{\bf v}={\bf P}^{-1}\dot{\bf v}$ for a given vector ${\bf v}\in\mathbb{R}^{NJ\times 1}$ can be fast computed as following stps,
	
	\begin{align*}
		&{\rm Step~2.1}: \quad  {\rm Compute~}\ddot{\bf v}=({\bf S}\otimes{\bf I}_N)\dot{\bf v}.\\
		&{\rm Step~2.2}: \quad  {\rm Compute~}\dddot{\bf v}=({\bf I}_J\otimes{\bf T}+\tilde{\bf \Lambda}\otimes {\bf I}_N)^{-1}\ddot{\bf v}.\\
		&{\rm Step~2.3}: \quad  {\rm Compute~}\hat{\bf v}=({\bf S}\otimes{\bf I}_N)\dddot{\bf v}.
	\end{align*}
	
	Steps 2.1, 2.3 can be fast implemented by multi-dimension fast sine transform, which requires $\mathcal{O}(NJ\log J)$ operations.
	It remains to discuss the computation of step 2.2. Rewrite $\tilde{\bf \Lambda}$ in \eqref{btaudiagreform} as
	\begin{equation*}
		\tilde{\bf \Lambda}=\diag(\tilde{\lambda}_i)_{i=1}^{J}.
	\end{equation*}
	Clearly, $\tilde{\lambda}_i$'s ($i=1,2,...,J$) are all positive numbers. Then, from Step 2.2, we see that
	\begin{equation}\label{step3decomp}
		\dddot{\bf v}=\textcolor{black}{\blockdiag}(\tilde{\bf T}_i^{-1})_{i=1}^{ J }\ddot{\bf v},
	\end{equation}
	where
	\begin{equation}\label{Ti}
		\tilde{{\bf T}_i}={\bf T}+\tilde{\lambda}_i{\bf I}_N,\quad i= 1,2,...,J.
	\end{equation}
	These $\tilde{\bf T}_i$'s are invertible lower triangular Toeplitz (ILTT) matrices. \cite{commenges1984fast} proposed a fast algorithm for fast inversion of ILTT matrices. With the fast inversion algorithm proposed in \cite{commenges1984fast}, step 2.2 can be fast computed within  $\mathcal{O}(JN\log N)$ flops. Summing over the operation cost for steps 2.1--2.3, we see that the computation of \eqref{step2pv} requires $\mathcal{O}(NJ\log (NJ))$ flops.
	
	Summing up the above discussion, for a given vector ${\bf v}\in\mathbb{R}^{NJ\times 1}$, the matrix-vector product ${\bf P}^{-1}{\bf v}$ can be fast computed within $\mathcal{O}(NJ\log(NJ))$ flops. 
	
	\begin{remark}
		Recall the two-sided preconditioning matrix in \eqref{twosidedpreclinsys}. As proposed in \cite{zhao2023bilateral,lin2021parallel}, the matrix-vector product of the two-sided preconditioned matrix is computed by ${\bf P}_l^{-1}({\bf A}({\bf P}_r^{-1}{\bf v}))$. Meanwhile, the matrix-vector product of our single-sided preconditioned matrix is computed by ${\bf P}^{-1}{\bf A}{\bf v}$.
		It is easy to see that the computation of a matrix vector product associated with ${\bf P}^{-1}=({\bf S}\otimes{\bf I}_N)({\bf I}_J\otimes{\bf T}+\tilde{\bf \Lambda}\otimes {\bf I}_N)^{-1}({\bf S}\otimes{\bf I}_N)$ is slightly faster than that of ${\bf P}_l^{-1}=({\bf S}\otimes{\bf I}_N)(\tilde{\bf \Lambda}^{-\frac{1}{2}}\otimes{\bf T}+\tilde{\bf \Lambda}^{\frac{1}{2}}\otimes {\bf I}_N)^{-1}({\bf S}\otimes{\bf I}_N)$, not to mention that the two-sided one requires an additional matrix-vector product associated with ${\bf P}_r^{-1}$. Therefore, each matrix-vector product of our single-sided preconditioning method requires less operations than that of the two-sided one. Moreover, Theorem \ref{residual} shows that GMRES solver for the single-sided preconditioned system converges faster than that for the two-sided one. That means our single-sided preconditioning method requires less computational time in total than that of the two-sided one. Indeed, numerical results in \textcolor{black}{Section \ref{sec:numerical}} show that our preconditioning method requires less computational than that of the two-sided one in the actual numerical tests.
	\end{remark}

	\section{Numerical examples}\label{sec:numerical}
	In this section, we test the performance of our proposed preconditioner. All numerical experiments are carried out using Octave on an HP EliteDesk 800 G5 Small Form Factor PC with Intel Core i5-8500 CPU @ 3.00GHz and 16GB RAM. 
	
	Numerical results for different discretization schemes presented in the Appendices are similar. Hence, we only present the result associated with the spatial discretization scheme \ref{sftgrwldwk} in this section.
	
	To demonstrate the effective of our proposed preconditioned GMRES solver, we will compare its performance with the two-sided preconditioned GMRES solver proposed in \cite{zhao2023bilateral} and GMRES solver without preconditioner. We adopt the notations GMRES-OS, GMRES-TS and GMRES-${\bf I}$ to represent the GMRES solver with single-sided preconditioner (our proposed), two-sided preconditioner \cite{zhao2023bilateral} and no preconditioner, respectively.

	In any case, the GMRES solver is implemented using the built-in functions in Octave. In details, for all the experiments presented in this section, the initial guess of GMRES is set as zero and its stopping criterion is set as $||{\bf r}_k||_2\leq 10^{-10}||{\bf r}_0||_2$, where ${\bf r}_k$ denotes the residual vector at $k$th GMRES iteration for $k\geq 1$ and ${\bf r}_0$ denotes the initial residual vector. The restarting number for GMRES is set as 20. 
	
	The computational time, denoted by "CPU" in the tables below, is counted in unit of second by the Octave built-in function \textbf{tic/toc}. The number of iterations of GMRES is denoted by ``Iter". In any case, when the number of iterations of GMRES solver is over 10000, we stop the GMRES iteration by hand and use the notation ``--" to represent its CPU. 
	
	Define the measure of error as
	\begin{equation*}
		{\rm Error}:=||{\bf u}_{*}-{\bf u}_{exact}||_{\infty},
	\end{equation*}
	where ${\bf u}_{*}$ denotes some iterative solution of the linear system \eqref{aaosystem}; ${\bf u}_{exaact}$ denotes the values of exact solution of the continuous problem \eqref{tsfde} on the time-space  grid.
	As the errors between the exact solution and the numerical solutions by the three solvers are always roughly the same, we only present ``Error" of GMRES-OS in each table below.

	\begin{example} \label{example_2D}
		{\rm	
			Consider the equations \eqref{tsfde} with 
			\begin{align*}
				d=2,\quad T=&1,\quad \Omega=(0,1)\times (0,1),\quad c_1=c_2=1,\quad \psi(x_1,x_2)\equiv 0,\\
				f(x_1, x_2, t)=&\frac{t^{\alpha+1}}{2 \cos (\beta_1 \pi / 2)}\left\{\left(\frac{2\left[x_1^{2-\beta_1}+(1-x_1)^{2-\beta_1}\right]}{\Gamma(3-\beta_1)}-\frac{12\left[x_1^{3-\beta_1}+(1-x_1)^{3-\beta_1}\right]}{\Gamma(4-\beta_1)}\right.\right. \\ &\left.\left.+\frac{24}{\Gamma(5-\beta_1)}\left[x_1^{4-\beta_1}+(1-x_1)^{4-\beta_1}\right]\right) x_2^{2}(1-x_2)^{2} \right\} \\ 
				&+\frac{ t^{\alpha+1}}{2 \cos (\beta_2 \pi / 2)}\left\{ \left(\frac{2\left[x_2^{2-\beta_2}+(1-x_2)^{2-\beta_2}\right]}{\Gamma(3-\beta_2)}-\frac{12\left[x_2^{3-\beta_2}+(1-x_2)^{3-\beta_2}\right]}{\Gamma(4-\beta_2)}\right.\right.\\
				&\left.\left.+\frac{24\left[x_2^{4-\beta_2}+(1-x_2)^{4-\beta_2}\right]}{\Gamma(5-\beta_2)}\right) x_1^{2}(1-x_1)^{2} \right\} \\
				&+\Gamma(\alpha+2) t x_1^{2}(1-x_1)^{2} x_2^{2}(1-x_2)^{2},
			\end{align*}
			the exact solution of which is given by $u(x,t)=t^{\alpha+1}x_1^2(1-x_1)^2x_2^2(1-x_2)^2$.
			
			Example \ref{example_2D} is discretized with $N$ temporal grid points and $M$ spatial grid points along each spatial direction (i.e., $M_1=M_2=M$). Numerical results of GMRES solver with different preconditioners for solving Example \ref{example_2D} are presented in Tables \ref{tab:example_diff_N}-\ref{tab:example_diff_M}. As GMRES without preconditioner is too time-consuming for Example \ref{example_2D}, we skip the results of unpreconditioned GMRES in Example \ref{example_2D}. Tables \ref{tab:example_diff_N}-\ref{tab:example_diff_M} show that  GMRES-OS and GMRES-TS have roughly the same iteration numbers while the CPU cost of  GMRES-OS is less than that of  GMRES-TS. The close iteration numbers of the two solvers demonstrates that GMRES-OS converges no slower than GMRES-TS, which supports Theorem \ref{residual}. Besides, each iteration of GMRES-OS requires less flops than GMRES-TS, which leads to GMRES-OS costing less computational time than GMRES-TS in total. It is also observed that the iteration numbers of GMRES-OS and GMRES-TS changes slightly and are always small no matter how $(\alpha,\beta_1,\beta)$, $N$ and $M$ change. That demonstrates the effectiveness and robustness of preconditioning technique based on $\tau$-matrix approximation.
			
			\begin{table}[H]
				\caption{Numerical results of GMRES with different preconditioners for Example \ref{example_2D} with $M=129$.}
				\label{tab:example_diff_N}
				\renewcommand{\arraystretch}{1.1}
				\setlength{\tabcolsep}{1.4em}
				\begin{center}
					\begin{tabular}{ccc|cc|cc}
						\hline
						\multirow{2}{*}{$(\alpha,\beta_1,\beta_2)$}&\multirow{2}{*}{$N$} &	\multirow{2}{*}{Error}&\multicolumn{2}{c|}{GMRES-OS}&\multicolumn{2}{c}{GMRES-TS} \\    
						\cline{4-7}
						&&&$\mathrm{Iter}$&$\mathrm{CPU(s)}$&$\mathrm{Iter}$&$\mathrm{CPU(s)}$\\
						\hline															
						\multirow{3}{*}{(0.1,1.1,1.1)}		&	$2^{6}$	&	3.01E-04	&	8	&	5.56	&	8	&	7.50	\\	
						&	$2^{7}$	&	3.01E-04	&	8	&	12.64	&	8	&	15.11	\\	
						&	$2^{8}$	&	3.01E-04	&	8	&	23.93	&	8	&	30.09	\\	
						\hline
						\multirow{3}{*}{(0.1,1.1,1.5)}		&	$2^{6}$	&	1.44E-04	&	8	&	5.55	&	8	&	7.49	\\	
						&	$2^{7}$	&	1.44E-04	&	8	&	11.79	&	8	&	15.16	\\	
						&	$2^{8}$	&	1.44E-04	&	8	&	22.33	&	8	&	30.39	\\	
						\hline
						\multirow{3}{*}{(0.1,1.1,1.9)}&	$2^{6}$	&	9.37E-05	&	7	&	5.38	&	7	&	6.79	\\	
						&	$2^{7}$	&	9.37E-05	&	7	&	10.86	&	7	&	13.92	\\	
						&	$2^{8}$	&	9.37E-05	&	7	&	19.85	&	7	&	27.34	\\	
						\hline
						\multirow{3}{*}{(0.1,1.5,1.5)}&	$2^{6}$	&	3.17E-05	&	8	&	5.59	&	8	&	7.46	\\	
						&	$2^{7}$	&	3.17E-05	&	8	&	13.07	&	8	&	15.20	\\	
						&	$2^{8}$	&	3.17E-05	&	8	&	24.44	&	8	&	30.28	\\	
						\hline
						\multirow{3}{*}{(0.1,1.5,1.9)}	&	$2^{6}$	&	1.26E-05	&	7	&	4.93	&	8	&	7.64	\\	
						&	$2^{7}$	&	1.26E-05	&	7	&	9.98	&	8	&	15.25	\\	
						&	$2^{8}$	&	1.26E-05	&	7	&	20.57	&	8	&	30.43	\\	
						\hline
						\multirow{3}{*}{(0.1,1.9,1.9)} &	$2^{6}$	&	4.92E-07	&	6	&	4.40	&	6	&	6.12	\\	
						&	$2^{7}$	&	4.92E-07	&	6	&	9.31	&	6	&	12.30	\\	
						&	$2^{8}$	&	4.92E-07	&	6	&	18.84	&	6	&	24.70	\\	
						\hline
						\multirow{3}{*}{(0.9,1.1,1.1)}	&	$2^{6}$	&	2.69E-04	&	9	&	6.11	&	9	&	8.25	\\	
						&	$2^{7}$	&	2.71E-04	&	9	&	13.10	&	9	&	16.67	\\	
						&	$2^{8}$	&	2.72E-04	&	9	&	24.05	&	9	&	33.26	\\	
						\hline
						\multirow{3}{*}{(0.9,1.1,1.5)}	&	$2^{6}$	&	1.30E-04	&	9	&	6.74	&	9	&	8.19	\\	
						&	$2^{7}$	&	1.31E-04	&	9	&	12.51	&	9	&	16.52	\\	
						&	$2^{8}$	&	1.32E-04	&	9	&	24.16	&	9	&	33.13	\\	
						\hline
						\multirow{3}{*}{(0.9,1.1,1.9)}&	$2^{6}$	&	8.64E-05	&	7	&	5.39	&	7	&	6.84	\\	
						&	$2^{7}$	&	8.76E-05	&	7	&	10.00	&	7	&	13.70	\\	
						&	$2^{8}$	&	8.81E-05	&	7	&	20.54	&	7	&	27.41	\\	
						\hline
						\multirow{3}{*}{(0.9,1.5,1.5)}&	$2^{6}$	&	2.71E-05	&	8	&	6.21	&	9	&	8.26	\\	
						&	$2^{7}$	&	2.85E-05	&	8	&	11.05	&	9	&	16.50	\\	
						&	$2^{8}$	&	2.91E-05	&	8	&	22.40	&	9	&	33.20	\\	
						\hline
						\multirow{3}{*}{(0.9,1.5,1.9)}&	$2^{6}$	&	1.02E-05	&	7	&	5.53	&	8	&	7.50	\\	
						&	$2^{7}$	&	1.12E-05	&	7	&	10.10	&	8	&	15.16	\\	
						&	$2^{8}$	&	1.17E-05	&	7	&	21.42	&	8	&	30.24	\\	
						\hline
						\multirow{3}{*}{(0.9,1.9,1.9)}	&	$2^{6}$	&	1.70E-06	&	6	&	4.85	&	6	&	6.06	\\	
						&	$2^{7}$	&	7.92E-07	&	6	&	10.15	&	6	&	12.17	\\	
						&	$2^{8}$	&	3.66E-07	&	6	&	18.58	&	6	&	32.07	\\	
						\hline
					\end{tabular}
				\end{center}
			\end{table}
			\begin{table}[H]
				\caption{Numerical results of GMRES with different preconditioners Example \ref{example_2D} with $N=128$ }
				\label{tab:example_diff_M}
				\renewcommand{\arraystretch}{1.1}
				\setlength{\tabcolsep}{1.4em}
				\begin{center}
					\begin{tabular}{ccc|cc|cc}
						\hline
						\multirow{2}{*}{$(\alpha,\beta_1,\beta_2)$}&\multirow{2}{*}{$M-1$} &	\multirow{2}{*}{Error}&\multicolumn{2}{c|}{GMRES-OS}&\multicolumn{2}{c}{GMRES-TS} \\    
						\cline{4-7}
						&&&$\mathrm{Iter}$&$\mathrm{CPU(s)}$&$\mathrm{Iter}$&$\mathrm{CPU(s)}$\\
						\hline															
						\multirow{3}{*}{(0.1,1.1,1.1)}		& $2^{6}$	&	5.51E-04	&	7	&	2.78	&	8	&	4.00	\\	
						&	$2^{7}$	&	3.01E-04	&	8	&	12.64	&	8	&	15.11	\\	
						&	$2^{8}$	&	1.58E-04	&	8	&	45.01	&	9	&	72.18	\\	
						\hline
						\multirow{3}{*}{(0.1,1.1,1.5)}&	$2^{6}$	&	2.70E-04	&	8	&	2.97	&	8	&	3.99	\\	
						&	$2^{7}$&	1.44E-04	&	8	&	11.79	&	8	&	15.16	\\	
						&	$2^{8}$&	7.41E-05	&	9	&	49.02	&	9	&	71.25	\\	
						\hline
						\multirow{3}{*}{(0.1,1.1,1.9)}	&	$2^{6}$	&	1.77E-04	&	6	&	2.88	&	7	&	3.68	\\	
						&	$2^{7}$	&	9.37E-05	&	7	&	10.86	&	7	&	13.92	\\	
						&	$2^{8}$	&	4.82E-05	&	7	&	40.48	&	8	&	65.40	\\	
						\hline
						\multirow{3}{*}{(0.1,1.5,1.5)}&	$2^{6}$	&	6.10E-05	&	7	&	2.47	&	8	&	4.03	\\	
						&	$2^{7}$	&	3.17E-05	&	8	&	13.07	&	8	&	15.20	\\	
						&	$2^{8}$	&	1.61E-05	&	8	&	44.42	&	9	&	72.30	\\	
						\hline
						\multirow{3}{*}{(0.1,1.5,1.9)}	&	$2^{6}$	&	2.36E-05	&	7	&	2.48	&	7	&	3.66	\\	
						&	$2^{7}$	&	1.26E-05	&	7	&	9.98	&	8	&	15.25	\\	
						&	$2^{8}$	&	6.53E-06	&	8	&	44.21	&	8	&	65.26	\\	
						\hline
						\multirow{3}{*}{(0.1,1.9,1.9)}&	$2^{6}$	&	6.77E-07	&	6	&	2.47	&	6	&	3.28	\\	
						&	$2^{7}$	&	4.92E-07	&	6	&	9.31	&	6	&	12.30	\\	
						&	$2^{8}$	&	4.14E-07	&	6	&	35.68	&	6	&	56.09	\\	
						\hline
						\multirow{3}{*}{(0.9,1.1,1.1)}	&	$2^{6}$	&	5.01E-04	&	8	&	2.71	&	8	&	3.98	\\	
						&	$2^{7}$	&	2.71E-04	&	9	&	13.10	&	9	&	16.67	\\	
						&	$2^{8}$	&	1.41E-04	&	9	&	53.91	&	9	&	72.10	\\	
						\hline
						\multirow{3}{*}{(0.9,1.1,1.5)}	&	$2^{6}$	&	2.49E-04	&	8	&	2.75	&	9	&	4.38	\\	
						&	$2^{7}$	&	1.31E-04	&	9	&	12.51	&	9	&	16.52	\\	
						&	$2^{8}$	&	6.68E-05	&	9	&	50.40	&	9	&	75.11	\\	
						\hline
						\multirow{3}{*}{(0.9,1.1,1.9)}&	$2^{6}$	&	1.67E-04	&	7	&	2.52	&	7	&	3.56	\\	
						&	$2^{7}$	&	8.76E-05	&	7	&	10.00	&	7	&	13.70	\\	
						&	$2^{8}$	&	4.45E-05	&	8	&	46.76	&	8	&	65.37	\\	
						\hline
						\multirow{3}{*}{(0.9,1.5,1.5)}	&	$2^{6}$	&	5.61E-05	&	8	&	2.71	&	9	&	4.38	\\	
						&	$2^{7}$	&	2.85E-05	&	8	&	11.05	&	9	&	16.50	\\	
						&	$2^{8}$	&	1.39E-05	&	9	&	49.60	&	9	&	69.97	\\	
						\hline
						\multirow{3}{*}{(0.9,1.5,1.9)}	&	$2^{6}$	&	2.17E-05	&	7	&	2.46	&	8	&	4.01	\\	
						&	$2^{7}$	&	1.12E-05	&	7	&	10.10	&	8	&	15.16	\\	
						&	$2^{8}$	&	5.34E-06	&	8	&	48.87	&	8	&	64.59	\\	
						\hline
						\multirow{3}{*}{(0.9,1.9,1.9)}&	$2^{6}$	&	1.03E-06	&	6	&	2.21	&	6	&	3.21	\\	
						&	$2^{7}$	&	7.92E-07	&	6	&	10.15	&	6	&	12.17	\\	
						&	$2^{8}$	&	3.68E-07	&	6	&	39.97	&	6	&	51.83	\\	
						\hline
					\end{tabular}
				\end{center}
			\end{table}
		}
	\end{example}
	
	\section{Conclusion}\label{sec:conclusion}
	This paper proposes a single-sided preconditioning method based on $\tau$-matrix approximation for a block triangular Toeplitz linear system arising from all-at-once of a non-local evolutionary equation with weakly singular kernels. Theoretically, we proved that the GMRES solver for the single-sided preconditioned system converges no slower than that for a auxiliary well-conditioned two-sided preconditioned system. Fast implementation is proposed for the matrix-vector multiplications associated with the preconditioned matrix. The theoretical results and the fast implementation are valid for several spatial discretization schemes. Numerical results reported have shown the efficiency and robustness of the proposed preconditioning technique.

	\section*{Acknowledgments}
	The work of Xue-lei  Lin was partially supported by research grants: 2021M702281 from China Postdoctoral Science Foundation, 12301480 from NSFC,  HA45001143 from Harbin Institute of Technology, Shenzhen, HA11409084  from Shenzhen. The work of Sean Hon was supported in part by the Hong Kong RGC under grant 22300921, a start-up grant from the Croucher Foundation, and a Tier 2 Start-up Grant from Hong Kong Baptist University.

	\bibliographystyle{plainnat}

	\begin{appendices}
		In the appendix, we verify $\{w_k\}_{k\geq 0}^{(\beta)}$ arising from spatial discretization schemes in the literature, which satisfy Property \ref{wkprop}.
		\section{Verification of $\{w_k\}_{k\geq 0}^{(\beta)}$ arising from \cite{ccelik2012crank}}
		$\{w_k\}_{k\geq 0}^{(\beta)}$ arising from \cite{ccelik2012crank} is defined by
		\begin{equation}\label{centraldiffwkdef}
			w_0^{(\beta)}=\frac{\Gamma(\beta+1)}{\Gamma(\beta/2+1)^2},\quad w_{k+1}^{(\beta)}=\left(1-\frac{\beta+1}{\beta/2+k+1}\right)w_{k}^{(\beta)},\quad k\geq 0.
		\end{equation}
		As $\Gamma(z)>0$ for $z>0$ and $\beta\in(1,2)$, it is clear that $w_0^{(\beta)}=\frac{\Gamma(\beta+1)}{\Gamma(\beta/2+1)^2}>0$ and that
		\begin{equation*}
			w_{1}^{(\beta)}=\left(1-\frac{\beta+1}{\beta/2+1}\right)w_{0}^{(\beta)}=\left(\frac{-\beta}{\beta+2}\right)w_0^{(\beta)}<0.
		\end{equation*}
		Notice that $1-\frac{\beta+1}{\beta/2+k+1}=\frac{2k-\beta}{2k+\beta+2}>0$ for $k\geq 1$. Therefore, it is trivial to see by induction that
		\begin{equation*}
			w_{k+1}^{(\beta)}=\left(1-\frac{\beta+1}{\beta/2+k+1}\right)w_{k}^{(\beta)}<0,\quad k\geq 1.
		\end{equation*}
		Hence, Property \ref{wkprop}${\bf (i)}$ is valid. Moreover, $1-\frac{\beta+1}{\beta/2+k+1}=\frac{2k-\beta}{2k+\beta+2}\in(0,1)$ for $k\geq 1$, 
		\begin{equation*}
			|w_{k+1}^{(\beta)}|=\left|\left(1-\frac{\beta+1}{\beta/2+k+1}\right)w_{k}^{(\beta)}\right|=\left|\frac{2k-\beta}{2k+\beta+2}\right||w_{k}^{(\beta)}|<|w_{k}^{(\beta)}|, \quad k\geq 1,
		\end{equation*}
		which combined with $w_{k}^{(\beta)}<0$ for $k\geq 1$ implies that $w_{k}^{(\beta)}<w_{k+1}^{(\beta)}$ for $k\geq 1$. In other words, Property \ref{wkprop}${\bf (iii)}$ is valid. 
		
		It is indicated in \cite{ccelik2012crank} that $2\sum\limits_{k=1}^{\infty}|w_k^{(\beta)}|=w_0^{(\beta)}$, which together with $w_k^{(\beta)}<0$ for $k\geq 1$ implies that
		\begin{align*}
			w_0^{(\beta)}+2\sum\limits_{k=1}^{m-1}w_k^{(\beta)}=-2\sum\limits_{k=1}^{\infty}w_k^{(\beta)}+2\sum\limits_{k=1}^{m-1}w_k^{(\beta)}=-2\sum\limits_{k=m}^{\infty}w_k^{(\beta)}=2\sum\limits_{k=m}^{\infty}|w_k^{(\beta)}|,\quad m\geq 1
		\end{align*}
		It is also shown in \cite{ccelik2012crank} that $|w_k^{(\beta)}|=\mathcal{O}((k+1)^{-\beta-1})$, which means
		\begin{align*}
			w_0^{(\beta)}+2\sum\limits_{k=1}^{m-1}w_k^{(\beta)}&=2\sum\limits_{k=m}^{\infty}|w_k^{(\beta)}|\\
			&\gtrsim \sum\limits_{k=m}^{\infty}\frac{1}{(k+1)^{\beta+1}}\geq \int_{m}^{\infty}\frac{1}{(1+x)^{1+\beta}}dx=\frac{1}{\beta(1+m)^{\beta}},\quad m\geq 1.
		\end{align*}
		Hence,
		\begin{equation*}
			(m+1)^{\beta}\left(w_0^{(\beta)}+2\sum\limits_{k=1}^{m-1}w_k^{(\beta)}\right)\gtrsim \frac{1}{\beta}>0,\quad m\geq 1.
		\end{equation*}
		Therefore,
		\begin{equation*}
			\inf\limits_{m\geq 1}(m+1)^{\beta}\left(w_0^{(\beta)}+2\sum\limits_{k=1}^{m-1}w_k^{(\beta)}\right)>0,
		\end{equation*}
		which means Property \ref{wkprop}${\bf (ii)}$ is valid.
		
		\section{Verification of $\{w_k^{(\beta)}\}_{k=0}^{\infty}$ arising from \cite{meerschaert2006finite}}
		$\{w_k^{(\beta)}\}_{k=0}^{\infty}$ arising from \cite{meerschaert2004finite} is defined by 
		\begin{align}
			&w_k^{(\beta)}=\gamma_{\beta}\tilde{w}_k^{(\beta)},\quad k\geq 0,\qquad \gamma_{\beta}=\frac{-1}{2\cos(\beta\pi/2)}>0,\label{sftgrwldwk}\\
			&\tilde{w}_k^{(\beta)}=2g_1^{(\beta)},\quad \tilde{w}_1^{(\beta)}=g_0^{(\beta)}+g_2^{(\beta)},\quad \tilde{w}_k^{(\beta)}=g_{k+1}^{(\beta)},~k\geq 2,\notag\\
			&g_0^{(\beta)}=-1,\quad g_{k+1}^{(\beta)}=\left(1-\frac{\beta+1}{k+1}\right)g_k^{(\beta)},~k\geq 0.\notag
		\end{align}
		Property \ref{wkprop}${\bf (i)}$, ${\bf (iii)}$ of $\{w_k^{(\beta)}\}_{k=0}^{\infty}$ defined in \eqref{sftgrwldwk} has been verified in  \cite[Lemma 4.1]{huang2021spectral}. 
		
		It thus remains to verify Property \ref{wkprop}${\bf (ii)}$. By \cite[Lemma 8]{lin2018stability}, it holds
		\begin{equation}\label{gkprop}
			\sum\limits_{k=0}^{\infty}g_k^{(\beta)}=0.
		\end{equation}
		Therefore,
		\begin{align*}
			w_0^{(\beta)}+2\sum\limits_{k=1}^{\infty}w_k^{(\beta)}&=\gamma_{\beta}\left(\tilde{w}_0^{(\beta)}+2\sum\limits_{k=1}^{\infty}\tilde{w}_k^{(\beta)}\right)\\
			&=\gamma_{\beta}\left[2g_1^{(\beta)}+2(g_0^{(\beta)}+g_2^{(\beta)})+2\sum\limits_{k=2}^{\infty}g_{k+1}^{(\beta)}\right]\\
			&=2\gamma_{\beta}\sum\limits_{k=0}^{\infty}g_k^{(\beta)}=0,
		\end{align*}
		which together with Property \ref{wkprop}${\bf (i)}$ implies that
		\begin{align*}
			w_0^{(\beta)}+2\sum\limits_{k=1}^{m-1}w_k^{(\beta)}=-2\sum\limits_{k=1}^{\infty}w_k^{(\beta)}+2\sum\limits_{k=1}^{m-1}w_k^{(\beta)}=-2\sum\limits_{m}^{\infty}w_k^{(\beta)}=2\sum\limits_{k=m}^{\infty}|w_k^{(\beta)}|.
		\end{align*}
		It is shown in \cite{meerschaert2004finite} that $g_k^{(\beta)}=\mathcal{O}((k+1)^{-\beta-1})$ for $k\geq 0$,  Therefore, $|w_k^{(\beta)}|=\mathcal{O}((k+1)^{-\beta-1})$, which means
		\begin{align*}
			w_0^{(\beta)}+2\sum\limits_{k=1}^{m-1}w_k^{(\beta)}&=2\sum\limits_{k=m}^{\infty}|w_k^{(\beta)}|\\
			&\gtrsim \sum\limits_{k=m}^{\infty}\frac{1}{(k+1)^{\beta+1}}\geq \int_{m}^{\infty}\frac{1}{(1+x)^{1+\beta}}dx=\frac{1}{\beta(1+m)^{\beta}},\quad m\geq 1.
		\end{align*}
		Hence,
		\begin{equation*}
			(m+1)^{\beta}\left(w_0^{(\beta)}+2\sum\limits_{k=1}^{m-1}w_k^{(\beta)}\right)\gtrsim \frac{1}{\beta}>0,\quad m\geq 1.
		\end{equation*}
		Therefore,
		\begin{equation*}
			\inf\limits_{m\geq 1}(m+1)^{\beta}\left(w_0^{(\beta)}+2\sum\limits_{k=1}^{m-1}w_k^{(\beta)}\right)>0.
		\end{equation*}
		Thus, Property \ref{wkprop}${\bf (ii)}$ of $\{w_k^{(\beta)}\}_{k=0}^{\infty}$ defined in \eqref{sftgrwldwk}  is valid.
		
		\section{Verification of $\{w_k^{(\beta)}\}_{k=0}^{\infty}$ arising from \cite{sousaelic}}
		$\{w_k^{(\beta)}\}_{k=0}^{\infty}$ arising from \cite{sousaelic} is defined by 
		\begin{align}
			&w_k^{(\beta)}=\hat{\gamma}_{\beta}\hat{w}_k^{(\beta)},\quad k\geq 0,\qquad \hat{\gamma}_{\beta}=\frac{-1}{2\cos(\beta\pi/2)\Gamma(4-\beta)}>0,\label{wghtfdwk}\\
			&\hat{w}_0^{(\beta)}=2p_1^{(\beta)},\quad \hat{w}_1^{(\beta)}=p_0^{(\beta)}+p_2^{(\beta)},\quad \hat{w}_k^{(\beta)}=p_{k+1}^{(\beta)},~k\geq 2,\notag\\
			&p_0^{(\beta)}=-1,\quad p_{1}^{(\beta)}=4-2^{3-\beta},\quad p_2^{(\beta)}=-3^{3-\beta}+4\times2^{3-\beta}-6,\notag\\
			&p_k^{(\beta)}=-(k+1)^{3-\beta}+4k^{3-\beta}-6(k-1)^{3-\beta}+4(k-2)^{3-\beta}-(k-3)^{3-\beta},\quad k\geq 3.\notag
		\end{align}
		It is easy to see that $w_0^{(\beta)}=\hat{\gamma}_{\beta}\hat{w}_0^{(\beta)}=2\hat{\gamma}_{\beta}p_1^{(\beta)}>0$ and that
		\begin{equation*}
			w_1^{(\beta)}=\hat{\gamma}_{\beta}(p_0^{(\beta)}+p_2^{(\beta)})=\hat{\gamma}_{\beta}(-3^{3-\beta}+4\times2^{3-\beta}-7)<0,\quad \beta\in(1,2).
		\end{equation*} 
		Moreover, it is shown in \cite[Lemma 4]{sousaelic} that $p_k^{(\beta)}\leq 0$ for $k\geq 3$. Then, $w_k^{(\beta)}=\hat{\gamma}_{\beta}\hat{w}_k^{(\beta)}=\hat{\gamma}_{\beta}p_{k+1}^{(\beta)}\leq 0$ for $k\geq 2$. So far, Property \ref{wkprop}${\bf (i)}$ of $\{w_k^{(\beta)}\}_{k=0}^{\infty}$  defined in \eqref{wghtfdwk} is shown to be valid.
		
		Notice that
		\begin{align*}
			\hat{w}_1^{(\beta)}-\hat{w}_2^{(\beta)}&=p_0^{(\beta)}+p_2^{(\beta)}-p_3^{(\beta)}\\
			&=4^{3-\beta}-5\times 3^{3-\beta}+10\times 2^{3-\beta}-11\leq 0,\quad \beta\in(1,2).
		\end{align*}
		In other words, $w_1^{(\beta)}=\hat{\gamma}_{\beta}\hat{w}_1^{(\beta)}\leq \hat{\gamma}_{\beta}\hat{w}_2^{(\beta)}=w_2^{(\beta)}$. Moreover, it is shown in \cite[Lemma  4]{sousaelic} that $p_k^{(\beta)}\leq p_{k+1}^{(\beta)}$ for $k\geq 3$. Thus, $w_k^{(\beta)}=\hat{\gamma}_{\beta}p_{k+1}^{(\beta)}\leq\hat{\gamma}_{\beta}p_{k+3}^{(\beta)}=w_{k+1}^{(\beta)}$ for $k\geq 2$, which means  Property \ref{wkprop}${\bf (iii)}$ of $\{w_k^{(\beta)}\}_{k=0}^{\infty}$  defined in \eqref{wghtfdwk} is valid.
		
		It thus remains to verify Property \ref{wkprop}${\bf (ii)}$. By \cite[Lemma 4]{sousaelic}, it holds
		\begin{equation}\label{gkprop}
			\sum\limits_{k=0}^{\infty}p_k^{(\beta)}=0.
		\end{equation}
		Therefore,
		\begin{align*}
			w_0^{(\beta)}+2\sum\limits_{k=1}^{\infty}w_k^{(\beta)}&=\hat{\gamma}_{\beta}\left(\hat{w}_0^{(\beta)}+2\sum\limits_{k=1}^{\infty}\hat{w}_k^{(\beta)}\right)\\
			&=\hat{\gamma}_{\beta}\left[2p_1^{(\beta)}+2(p_0^{(\beta)}+p_2^{(\beta)})+2\sum\limits_{k=2}^{\infty}p_{k+1}^{(\beta)}\right]\\
			&=2\hat{\gamma}_{\beta}\sum\limits_{k=0}^{\infty}p_k^{(\beta)}=0,
		\end{align*}
		which together with Property \ref{wkprop}${\bf (i)}$ implies that
		\begin{align*}
			w_0^{(\beta)}+2\sum\limits_{k=1}^{m-1}w_k^{(\beta)}=-2\sum\limits_{k=1}^{\infty}w_k^{(\beta)}+2\sum\limits_{k=1}^{m-1}w_k^{(\beta)}=-2\sum\limits_{m}^{\infty}w_k^{(\beta)}=2\sum\limits_{k=m}^{\infty}|w_k^{(\beta)}|
		\end{align*}
		It is shown in \cite{lin2018stability} that $p_k^{(\beta)}=\mathcal{O}((k+1)^{-\beta-1})$ for $k\geq 0$,  Therefore, $|w_k^{(\beta)}|=\mathcal{O}((k+1)^{-\beta-1})$, which means
		\begin{align*}
			w_0^{(\beta)}+2\sum\limits_{k=1}^{m-1}w_k^{(\beta)}&=2\sum\limits_{k=m}^{\infty}|w_k^{(\beta)}|\\
			&\gtrsim \sum\limits_{k=m}^{\infty}\frac{1}{(k+1)^{\beta+1}}\geq \int_{m}^{\infty}\frac{1}{(1+x)^{1+\beta}}dx=\frac{1}{\beta(1+m)^{\beta}},\quad m\geq 1.
		\end{align*}
		Hence,
		\begin{equation*}
			(m+1)^{\beta}\left(w_0^{(\beta)}+2\sum\limits_{k=1}^{m-1}w_k^{(\beta)}\right)\gtrsim \frac{1}{\beta}>0,\quad m\geq 1.
		\end{equation*}
		Therefore,
		\begin{equation*}
			\inf\limits_{m\geq 1}(m+1)^{\beta}\left(w_0^{(\beta)}+2\sum\limits_{k=1}^{m-1}w_k^{(\beta)}\right)>0.
		\end{equation*}
		Thus, Property \ref{wkprop}${\bf (ii)}$ of $\{w_k^{(\beta)}\}_{k=0}^{\infty}$ defined in \eqref{wghtfdwk}  is valid.
	\end{appendices}
\end{document}